\renewcommand\section{\@startsection{section}{1}{0mm}{-1.5\baselineskip}{\baselineskip}{\normalsize\bfseries\sffamily}}
\renewcommand\subsection{\@startsection{subsection}{1}{0mm}{-\baselineskip}{\baselineskip}{\normalsize\bfseries\sffamily}}
\def\@fnsymbol#1{\ensuremath{\ifcase#1\or *\or **\or \dagger\or \ddagger\or
   \mathsection\or \mathparagraph\or \|\or \dagger\dagger
   \or \ddagger\ddagger \else\@ctrerr\fi}}
\newlength{\preskip}
\newlength{\postskip}
\newtheoremstyle{theorem}{\preskip}{\postskip}{\itshape}{}{\bfseries}{}
{.5em}{\textbf{\thmname{#1}\thmnumber{ #2} (\thmnote{ #3})}}
\newtheoremstyle{definition}{\preskip}{\postskip}{\normalfont}{0pt}{\bfseries}{}{.5em}{}
\newtheoremstyle{remark}{\preskip}{\postskip}{\normalfont}{0pt}{\bfseries}{}{.5em}{}
\theoremstyle{theorem} \newtheorem{thm}{Theorem}[section]
\theoremstyle{theorem} \newtheorem{lem}[thm]{Lemma}
\theoremstyle{theorem} \newtheorem{prop}[thm]{Proposition}
\theoremstyle{theorem} \newtheorem{kor}[thm]{Corollary}
\theoremstyle{definition} \newtheorem{defn}[thm]{Definition}
\theoremstyle{remark} 
\theoremstyle{remark} 
\theoremstyle{definition} 
\theoremstyle{definition} 
\theoremstyle{remark} \newtheorem{bem}[thm]{Remark}
\theoremstyle{remark} 
\theoremstyle{definition}  \newtheorem{bsp}[thm]{Example}
\theoremstyle{definition}  
\theoremstyle{definition}
\DeclareMathOperator \id {id}
\DeclareMathOperator \uc {uc}
\DeclareMathOperator \tr {tr}
\newcommand{\I}{\mathds{1}}
\newcommand\fa{\qquad \text{for all \ }}
\newcommand\mc[1] {\mathcal{#1}}
\newcommand\mbb[1] {\mathds{#1}}
\newcommand{\eps}{\varepsilon}
\begin{document}

\title{On infinitesimal generators of sublinear Markov semigroups}
\author[F.~K\"{u}hn]{Franziska K\"{u}hn} 
\address[F.~K\"{u}hn]{Institut de Math\'ematiques de Toulouse, Universit\'e Paul Sabatier III Toulouse, 118 Route de Narbonne, 31062 Toulouse, France}
\email{franziska.kuehn1@tu-dresden.de}
\subjclass[2010]{Primary: 47H20. Secondary: 60J35, 47J35, 49L25}
\keywords{sublinear semigroup, infinitesimal generator, Courr\`ege-von Waldenfels theorem, Dynkin formula, L\'evy process for sublinear expectations, Hamilton--Jacobi--Bellman equation, pseudo-differential operator, evolution equation}

\begin{abstract}
	We establish a Dynkin formula and a Courr\`ege-von Waldenfels theorem for sublinear Markov semigroups. In particular, we show that any sublinear operator $A$ on $C_c^{\infty}(\mbb{R}^d)$  satisfying the positive maximum principle can be represented as supremum of a family of pseudo-differential operators: \begin{equation*}
		Af(x) = \sup_{\alpha \in I} (-q_{\alpha}(x,D) f)(x).
	\end{equation*}
	As an immediate consequence, we obtain a representation formula for infinitesimal generators of sublinear Markov semigroups with a sufficiently rich domain. We give applications in the theory of non-linear Hamilton--Jacobi--Bellman equations and L\'evy processes for sublinear expectations.
\end{abstract}
\maketitle

\section{Introduction} \label{intro}

Let $(T_t)_{t \geq 0}$ be a Markov semigroup of linear operators on the space $\mc{B}_b(\mbb{R}^d)$ of bounded Borel measurable functions, i.e.\ a family of contractive linear operators $T_t: \mc{B}_b(\mbb{R}^d) \to \mc{B}_b(\mbb{R}^d)$ satisfying the semigroup property and the sub-Markov property ($0 \leq u \leq 1 \implies 0 \leq T_t u \leq 1$). Many properties of the semigroup $(T_t)_{t \geq 0}$ can be characterized via the associated infinitesimal generator \begin{equation*}
	Af(x):= \lim_{t \to 0} \frac{T_t f(x)-f(x)}{t}, \qquad f \in \mc{D}(A), \, x \in \mbb{R}^d,
\end{equation*}
whose domain $\mc{D}(A)$ is defined in such a way that the limit exists in a suitable sense, cf.\ Section~\ref{def}. Strongly continuous Markov semigroups are uniquely determined by their generator $(A,\mc{D}(A))$, cf.\ \cite[Corollary I.4.1.35]{jacob123}. If the domain $\mc{D}(A)$ of the infinitesimal generator is sufficiently rich, in the sense that the compactly supported smooth functions $f \in C_c^{\infty}(\mbb{R}^d)$ belong to $\mc{D}(A)$, then a result due to Courr\`ege \cite{courrege} and von Waldenfels \cite{wald1,wald2} states that $A|_{C_c^{\infty}(\mbb{R}^d)}$ has a representation of the form \begin{align*}
	Af(x) &= - c(x)f(x) + b(x) \cdot \nabla f(x) + \frac{1}{2} \tr(Q(x) \cdot \nabla^2 f(x)) \\ &\quad+ \int_{y \neq 0} (f(x+y)-f(x)-y \cdot \nabla f(x) \I_{(0,1)}(|y|)) \, \nu(x,dy), \qquad x \in \mbb{R}^d.
\end{align*}
Equivalently, $A|_{C_c^{\infty}(\mbb{R}^d)}$ can be written as a pseudo-differential operator with negative definite symbol, see Section~\ref{def} for details. In combination with Dynkin's formula \begin{equation*}
	T_t f- f = \int_0^t T_s Af \, ds, \qquad f \in \mc{D}(A), \, t \geq 0,
\end{equation*}
cf.\ \cite[Lemma I.4.1.14]{jacob123}, which can be seen as a counterpart of the fundamental theorem of calculus, this representation formula for $A$ has turned out to be a very powerful tool in the study of Markov semigroups and the associated Markov processes, cf.\ \cite{ltp,hoh,jacob123}. \par
In this paper, we extend Dynkin's formula and the Courr\`ege-Waldenfels theorem to \emph{sub}linear Markov semigroups $(T_t)_{t \geq 0}$, i.e.\ the operators $T_t$ are no longer assumed to be linear but only subadditive and positively homogeneous: \begin{equation*}
	\forall f,g, \, \forall \lambda \in [0,\infty)\::\: T_t (f+g) \leq T_t (f)+T_t(g) \qquad T_t (\lambda f) = \lambda T_t (f).
\end{equation*}
Sublinear Markov semigroups appear naturally in the study of stochastic processes on sublinear expectation spaces. They can be interpreted as stochastic processes under uncertainty, cf.\ Hollender \cite{julian}, and in many cases the semigroup has a representation of the form \begin{equation*}
	T_t f(x) = \sup_{\mbb{P} \in \mathfrak{P}^x} \mbb{E}_{\mbb{P}} f(X_t) := \sup_{\mbb{P} \in \mathfrak{P}^x} \int_{\Omega} f(X_t) \, d\mbb{P}
\end{equation*}
where the supremum is taken over a family of probability measures $\mathfrak{P}^x$ which depends on the starting point $x \in \mbb{R}^d$. As in case of classical Markov semigroups, it is possible to associate an evolution equation with sublinear semigroups $(T_t)_{t \geq 0}$, \begin{equation*}
	\frac{\partial}{\partial t} u(t,x) - A_x u(t,x)=0 \qquad u(0,x)=f(x),
\end{equation*}
where $A$ is the (sublinear) infinitesimal generator, cf.\ \cite[Proposition 4.10]{julian}. In a recent paper, Denk et al.\ \cite{denk19} studied under which conditions a sublinear semigroup is uniquely determined by its infinitesimal generator. The Courr\`ege-von Waldenfels theorem which we derive in this paper, cf.\ Corollary~\ref{lk-3}, shows that sublinear generators with a sufficiently rich domain have a representation of the form 
\begin{align*}
	Af(x) &= \sup_{\alpha \in I} \bigg( -c_{\alpha}(x) f(x) + b_{\alpha}(x) \cdot \nabla f(x) + \frac{1}{2} \tr(Q_{\alpha}(x) \cdot \nabla^2 f(x)) \\ &\quad \quad + \int_{y \neq 0} (f(x+y)-f(x)-y \cdot \nabla f(x) \I_{(0,1)}(|y|)) \, \nu_{\alpha}(x,dy)\bigg), \quad f \in C_c^{\infty}(\mbb{R}^d),
\end{align*}
and therefore sublinear Markov semigroups play an important role in the study of non-linear Hamilton--Jacobi--Bellman (HJB) equations, \begin{align} \label{hjb} \begin{aligned}
	\partial_t u(t,x) - \sup_{\alpha \in I} &\bigg(-c_{\alpha}(x) u(t,x)+ b_{\alpha}(x) \cdot \nabla_x u(t,x) + \frac{1}{2} \tr(Q_{\alpha}(x) \cdot \nabla_x^2 u(t,x))   \\  &+ \int_{y \neq 0} (u(t,x+y)-u(t,x)-y \cdot \nabla_x u(t,x) \I_{(0,1)}(|y|)) \, \nu_{\alpha}(x,dy)\bigg) = 0.
\end{aligned}
\end{align}
The idea to approach non-linear equations via stochastic processes on non-linear expectation spaces goes back to Peng \cite{peng} who introduced the so-called G-Brownian motion to study the G-heat equation \begin{equation*}
	\partial_t u(t,x)- \frac{1}{2} \sup_{\alpha \in I} (\tr(Q_{\alpha} \cdot \nabla_x^2 u(t,x)))=0.
\end{equation*}
More recently, the connection between non-linear integro-differential equations and nonlinear semigroups has been investigated in \cite{denk,julian,hjb,nendel19}. We will establish a general result which shows that for any ``nice'' sublinear semigroup $(T_t)_{t \geq 0}$ the mapping $u(t,x) := T_t f(x)$ is a viscosity solution to an HJB equation of the form \eqref{hjb}, cf.\ Section~\ref{visc}. \par
The paper is structured as follows. After introducing basic definitions and notation in Section~\ref{def}, we present a generalization of Dynkin's formula for sublinear Markov semigroups in Section~\ref{dynkin}. The Courr\`ege-von Waldenfels theorem for sublinear operators is stated and proved in Section~\ref{lk}. We use the Courr\`ege-von Waldenfels theorem to study the connection between HJB equations \eqref{hjb} and sublinear Markov semigroups, cf.\ Section~\ref{visc}. Some applications in the theory of L\'evy processes for sublinear expectations are presented in Section~\ref{levy}.

\section{Definitions and notation} \label{def}

\emph{Sublinear semigroups}: A family $\mc{H}$ of real-valued functions $f: \mbb{R}^d \to \mbb{R}$ is a \emph{convex cone} if $\alpha f+\beta g \in \mc{H}$ for all $\alpha,\beta \in \mbb{R}$, $f,g \in \mc{H}$ and $c \I_{\mbb{R}^d} \in \mc{H}$ for all $c \in \mbb{R}$. If $(T_t)_{t \geq 0}$ is a family of sublinear operators on a convex cone $\mc{H}$, i.e. \begin{equation*}
	\forall f,g \in \mc{H}, \, \lambda \in [0,\infty)\::\: T_t (f+g) \leq T_t f + T_t g \qquad T_t (\lambda f) = \lambda T_t f,
\end{equation*}
then we call $(T_t)_{t \geq 0}$ a \emph{sublinear Markov semigroup (on $\mc{H}$)} if the following properties are satisfied: \begin{enumerate}
	\item $T_{t+s} = T_t  T_s$ for all $s,t \geq 0$, and $T_0=\id$ (semigroup property),
	\item $f,g \in \mc{H}$, $f \leq g$ implies $T_t f \leq T_t g$ for all $t \geq 0$ (monotonicity),
	\item $T_t(\I_{\mbb{R}^d}) \leq \I_{\mbb{R}^d}$.
\end{enumerate}
If $T_t(\I_{\mbb{R}^d})=\I_{\mbb{R}^d}$, then $(T_t)_{t \geq 0}$ is \emph{conservative}. The \emph{(strong) infinitesimal generator} $(A,\mc{D}(A))$ of a sublinear Markov semigroup $(T_t)_{t \geq 0}$ is defined by \begin{align*}
	\mc{D}(A) &:= \left\{f \in \mc{H}; \, \exists g \in \mc{H}\::\: \lim_{t \to 0} \left\| \frac{T_t f-f}{t} - g \right\|_{\infty} = 0 \right\}, \\
	Af &:= \lim_{t \downarrow 0} \frac{T_tf-f}{t}, \qquad f \in \mc{D}(A).
\end{align*}
If the limit $g(x):=\lim_{t \to 0} t^{-1} (T_t f(x)-f(x))$ exists for all $x \in \mbb{R}^d$ and defines a function in $\mc{H}$, then $f$ is in the domain $\mc{D}(A^{(p)})$ of the \emph{pointwise infinitesimal generator} $A^{(p)}$, and we set \begin{equation*}
	A^{(p)}f(x) := \lim_{t \downarrow 0} \frac{T_t f(x)-f(x)}{t}, \qquad x \in \mbb{R}^d, \, f \in \mc{D}(A^{(p)}).
\end{equation*}
By definition, the pointwise infinitesimal generator $(A^{(p)},\mc{D}(A^{(p)})$ is an extension of the (strong) infinitesimal generator $(A,\mc{D}(A))$. It is immediate that $(A,\mc{D}(A))$ and $(A^{(p)},\mc{D}(A^{(p)}))$ are sublinear operators. The next lemma is simple to prove but will play an important role lateron when we investigate the structure of sublinear generators.

\begin{lem} \label{def-1}
	Let $(T_t)_{t \geq 0}$ be a sublinear Markov semigroup. The associated pointwise infinitesimal generator $(A^{(p)},\mc{D}(A^{(p)})$ satisfies the \emph{positive maximum principle}, i.e. \begin{equation*}
		f \in \mc{H}, \, f(x_0) = \sup_{x \in \mbb{R}^d} f(x) \geq 0 \implies A^{(p)}f (x_0) \leq 0.
	\end{equation*}
\end{lem}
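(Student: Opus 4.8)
The plan is to exploit the defining properties of a sublinear Markov semigroup—monotonicity and the sub-Markov normalization $T_t(\I_{\mbb{R}^d}) \leq \I_{\mbb{R}^d}$—to control the sign of the difference quotient at a global maximum point. Suppose $f \in \mc{H}$ attains its supremum at $x_0$ with $M := f(x_0) = \sup_{x} f(x) \geq 0$. The key observation is the pointwise bound $f \leq M \I_{\mbb{R}^d}$, so that monotonicity gives $T_t f \leq T_t(M \I_{\mbb{R}^d})$ for every $t \geq 0$.

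Next I would use positive homogeneity together with the sub-Markov property to estimate the right-hand side. Since $M \geq 0$, we have $T_t(M \I_{\mbb{R}^d}) = M\, T_t(\I_{\mbb{R}^d}) \leq M \I_{\mbb{R}^d}$, where the inequality is property (iii). Evaluating at the maximizer $x_0$ therefore yields
\begin{equation*}
	T_t f(x_0) \leq M = f(x_0) \qquad \text{for all } t \geq 0.
\end{equation*}
This is the crucial sign control: the semigroup cannot push the value at a nonnegative global maximum above its current level.

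From here the conclusion is immediate. Forming the difference quotient at $x_0$ gives $t^{-1}(T_t f(x_0) - f(x_0)) \leq 0$ for all $t > 0$, and passing to the limit $t \downarrow 0$—which exists by the assumption $f \in \mc{D}(A^{(p)})$—preserves the inequality, so that $A^{(p)} f(x_0) \leq 0$. This establishes the positive maximum principle for $A^{(p)}$.

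I do not expect any serious obstacle here; the lemma is essentially a direct unwinding of the definitions, which is why the text flags it as ``simple to prove.'' The only point requiring minor care is the role of the hypothesis $M \geq 0$: positive homogeneity is stated only for nonnegative scalars $\lambda \in [0,\infty)$, so the step $T_t(M\I_{\mbb{R}^d}) = M\, T_t(\I_{\mbb{R}^d})$ genuinely needs $M \geq 0$, and this is exactly where the nonnegativity assumption in the statement of the positive maximum principle is consumed. Without it one could not pull the constant through $T_t$, and indeed the conclusion can fail for subadditive-but-not-linear operators when the maximum is negative.
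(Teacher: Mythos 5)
Your proof is correct and follows essentially the same route as the paper: bound $T_t f(x_0)$ from above by $f(x_0)$ using monotonicity, positive homogeneity and $T_t(\I_{\mbb{R}^d}) \leq \I_{\mbb{R}^d}$, then pass to the limit in the difference quotient. If anything, your choice of the constant $M = f(x_0) = \sup_x f(x) \geq 0$ is slightly cleaner than the paper's $\|f\|_{\infty}$, which need not equal $f(x_0)$ when $f$ takes large negative values.
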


\begin{proof}
	Fix $f \in \mc{H}$ and $x_0 \in \mbb{R}^d$ with $f(x_0) =\sup_{x \in \mbb{R}^d} f(x)$. Since $T_t$ is monotone, positively homogeneous and $T_t(1) \leq 1$, we have \begin{equation*}
		T_t f(x_0) \leq T_t (\|f\|_{\infty})(x_0) \leq \|f\|_{\infty} = f(x_0).
	\end{equation*}
	Subtracting $f(x_0)$ on both sides, dividing by $t>0$ and letting $t \downarrow 0$ yields $A^{(p)} f(x_0)\leq0.$
\end{proof}

The positive maximum principle clearly also holds for the (strong) generator $(A,\mc{D}(A))$. Our standard reference for non-linear semigroups is the monograph by Miyadera \cite{miyadera}. \par
\emph{Pseudo-differential operators}: Let $q(x,\cdot)$, $x \in \mbb{R}^d$ be a family of \emph{continuous negative definite} functions with representation \begin{equation}
	q(x,\xi) =c(x) -ib(x) \cdot \xi + \frac{1}{2} \xi \cdot Q(x) \xi + \int_{y \neq 0} (1-e^{iy \cdot \xi} + iy \cdot \xi \I_{(0,1)}(|y|)) \, \nu(x,dy), \qquad x,\xi \in \mbb{R}^d, \label{cndf}
\end{equation}
where $c(x) \geq 0$, $b(x) \in \mbb{R}^d$, $Q(x) \in \mbb{R}^{d \times d}$ is positive semidefinite and $\nu(x,dy)$ is a measure such that $\int_{y \neq 0} \min\{1,|y|^2\} \, \nu(x,dy)<\infty$ for each fixed $x \in \mbb{R}^d$. We will sometimes call $(c(x),b(x),Q(x),\nu(x,dy))$ \emph{characteristics} of $q(x,\cdot)$. The associated \emph{pseudo-differential operator} is defined on the smooth compactly supported functions $C_c^{\infty}(\mbb{R}^d)$ by \begin{equation}
	q(x,D) f(x) := - \int_{\mbb{R}^d} q(x,\xi) e^{ix \cdot \xi} \hat{f}(\xi) \, d\xi, \qquad f \in C_c^{\infty}(\mbb{R}^d), \, x \in \mbb{R}^d, \label{pseudo1}
\end{equation}
where $\hat{f}(\xi) := (2\pi)^{-d} \int_{\mbb{R}^d} f(x) e^{-ix \cdot \xi} \, dx$ is the Fourier transform of $f$, and $q$ is called \emph{symbol} of the operator. Equivalently, \begin{equation} \begin{aligned}
	q(x,D) f(x) &= -c(x) f(x) + b(x) \cdot \nabla f(x) + \frac{1}{2} \tr(Q(x) \cdot \nabla^2 f(x)) \\ &\quad + \int_{y \neq 0} \left( f(x+y)-f(x)-y \cdot \nabla f(x) \I_{(0,1)}(|y|) \right) \, \nu(x,dy).\end{aligned}  \label{pseudo2}
\end{equation}
An application of Taylor's formula shows that the pseudo-differential operator extends via \eqref{pseudo2} to an operator on $C_b^2(\mbb{R}^d)$ satisfying \begin{equation}
	|q(x,D)f(x)| \leq M \|f\|_{C_b^2(\mbb{R}^d)} \left( |c(x)| +|b(x)| + |Q(x)| + \int_{y \neq 0} \min\{1,|y|^2\} \, \nu(x,dy) \right), \,\, f \in C_b^2(\mbb{R}^d) \label{def-eq11}
\end{equation}
for some absolute constant $M>0$. Pseudo-differential operators appear naturally in the study of stochastic processes, e.g.\ Feller processes, stochastic differential equations and martingale problems, cf.\  \cite{ltp,hoh,jacob123,matters}. \par
\emph{Sublinear expectation spaces}: A \emph{sublinear expectation space} $(\Omega,\mc{H},\mc{E})$ consists of a set $\Omega \neq \emptyset$,  a linear space $\mc{H}$ of functions $f: \Omega \to \mbb{R}$ and a functional $\mc{E}: \mc{H} \to \real$ with the following properties: \begin{enumerate}
	\item $\mc{E}$ is subadditive, i.e.\ $\mc{E}(X+Y) \leq \mc{E}(X) + \mc{E}(Y)$ for all $X,Y \in \mc{H}$,
	\item $\mc{E}$ is positively homogeneous, i.e.\ $\mc{E}(\lambda X)=\lambda \mc{E}(X)$ for all $\lambda \geq 0$ and $X \in \mc{H}$,
	\item $\mc{E}$ preserves constants, i.e.\ $\mc{E}(c)=c$ for all $c \in \mbb{R}$,
	\item $\mc{E}$ is monotone, i.e.\ $\mc{E}(X) \leq \mc{E}(Y)$ for all $X,Y \in \mc{H}$ with $X \leq Y$.
\end{enumerate}
To introduce classical notions, such as random variables and independence, one needs to fix a class of test functions $\mc{T}$. In this paper, we take $\mc{T} := C_b^{\uc}(\mbb{R}^d)$, the space of bounded uniformly continuous functions. A ($\mbb{R}^d$-valued) \emph{random variable} on a sublinear expectation space $(\Omega,\mc{H},\mc{E})$ is a mapping $X: \Omega \to \mbb{R}^d$ such that $\varphi(X) \in \mc{H}$ for all $\varphi \in C_b^{\uc}(\mbb{R}^d)$. We also say that $X$ is \emph{adapted}. Two adapted $\mbb{R}^d$-valued random variables $X$ and $Y$ are \emph{equal in distribution}, $X \stackrel{d}{=} Y$, if \begin{equation*}
	\forall \varphi \in C_b^{\uc}(\mbb{R}^d)\::\: \mc{E}(\varphi(X)) = \mc{E}(\varphi(Y)).
\end{equation*}
The random variables $X$ and $Y$ are called \emph{independent} if \begin{equation*}
	\forall \varphi \in C_b^{\uc}(\mbb{R}^d \times \mbb{R}^d)\::\: \mc{E}(\varphi(X,Y)) = \mc{E} \big( \mc{E}(\varphi(x,Y)) \big|_{y=Y} \big);
\end{equation*}
it is implicitly assumed that all terms are well-defined. For an introduction to sublinear expectation spaces and their connection to stochastic processes on sublinear expectation spaces we refer to \cite{julian} and the references therein.

\emph{Function spaces}: The space of bounded Borel measurable functions $f: \mbb{R}^d \to \mbb{R}$ is denoted by $\mc{B}_b(\mbb{R}^d)$. We write $C_b(\mbb{R}^d)$ (resp.\ $C_b^{\uc}(\mbb{R}^d)$) for the bounded continuous (resp.\ uniformly continuous) functions $f: \mbb{R}^d \to \mbb{R}$. The compactly supported smooth functions $f: \mbb{R}^d \to\mbb{R}$ are denoted by $C_c^{\infty}(\mbb{R}^d)$. 

\section{Dynkin's formula for sublinear Markov semigroups} \label{dynkin}

Let $(T_t)_{t \geq 0}$ be a Markov semigroup of linear operators on $\mc{H}:=\mc{B}_b(\mbb{R}^d)$ with infinitesimal generator $(A,\mc{D}(A))$. Dynkin's formula states that \begin{equation}
	T_t f(x)- f(x) = \int_0^t T_s Af(x) \, ds, \qquad t \geq 0, \, x \in \mbb{R}^d, \label{dynkin-eq1}
\end{equation}
for all $f \in \mc{D}(A)$. If $T_t f(x) = \mbb{E}^x f(X_t)$ is the semigroup associated with a Markov process $(X_t)_{t \geq 0}$, then \eqref{dynkin-eq1} can be written equivalently in a probabilistic way: \begin{equation*}
	\mbb{E}^x f(X_t)-f(x) = \int_0^t \mbb{E}^x Af(X_s) \, ds, \qquad t \geq 0, \, x \in \mbb{R}^d.
\end{equation*}
Dynkin's formula \eqref{dynkin-eq1} holds more generally for functions in the domain of the weak generator, cf.\ Dynkin \cite{dynkin}, and for functions in the Favard space of order $1$, cf.\  Airault \& F\"{o}llmer \cite[p.~320-322]{foellmer74}; see \eqref{favard} below for the definition of the Favard space. In this section, we will show the following Dynkin-type formula for sublinear Markov semigroups \begin{equation*}
	- \int_0^t T_s Af(x) \, ds \leq T_t f(x)-f(x) \leq \int_0^t T_s Af(x) \, ds,  \qquad x \in \mbb{R}^d, \, t \geq 0,
\end{equation*}
for $f \in \mc{D}(A)$, see Theorem~\ref{dynkin-9} below. In general, the inequalities are strict. For the particular case that $(T_t)_{t \geq 0}$ is a Markov semigroup of linear operators, this gives the classical Dynkin formula \eqref{dynkin-eq1}. For the proof of the sublinear Dynkin formula, we need some auxiliary statements, see also Miyadera \cite[Section 3.1]{miyadera} for some related results.

\begin{lem} \label{dynkin-1}
	Let $(T_t)_{t \geq 0}$ be a sublinear Markov semigroup on $\mc{H}$ with Favard space $F_1$ of order $1$, i.e.\ \begin{equation}
		f \in F_1 \iff f \in \mc{H}, \, L(f) := \sup_{t>0} \frac{\|T_tf-f\|_{\infty}}{t} < \infty. \label{favard}
	\end{equation}
	Then: \begin{enumerate}
		\item\label{dynkin-1-i} $T_t (F_1) \subseteq F_1$ for all $t \geq 0$,
		\item\label{dynkin-1-ii} $\varphi(t) := L(T_t f)$ is non-increasing for each $f \in F_1$,
		\item\label{dynkin-1-iii} $t \mapsto T_t f(x)$ is globally Lipschitz continuous with Lipschitz constant $L(f)$ for all $f \in F_1$ and $x \in \mbb{R}^d$.
	\end{enumerate}
\end{lem}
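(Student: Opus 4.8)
The plan is to first establish that each $T_t$ is a contraction on $(\mc{H}, \|\cdot\|_{\infty})$, and then to read off all three assertions from this fact together with the semigroup property. The contraction estimate is the single ingredient that powers the whole argument, so I would isolate it at the very beginning.

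First I would record the one-sided bound coming from sublinearity: for any $f,g \in \mc{H}$, writing $f = (f-g)+g$ and applying subadditivity gives $T_t f \leq T_t(f-g) + T_t g$, i.e.\ $T_t f - T_t g \leq T_t(f-g)$. Since $f-g \leq \|f-g\|_{\infty} \I$ pointwise, monotonicity, positive homogeneity (note $\|f-g\|_{\infty} \geq 0$) and the sub-Markov property $T_t \I \leq \I$ yield $T_t(f-g) \leq \|f-g\|_{\infty} T_t \I \leq \|f-g\|_{\infty} \I$. Interchanging the roles of $f$ and $g$ gives the two-sided estimate $\|T_t f - T_t g\|_{\infty} \leq \|f-g\|_{\infty}$, so every $T_t$ is a contraction with respect to the supremum norm.

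For assertion \eqref{dynkin-1-i} I would fix $f \in F_1$ and estimate $L(T_t f)$ directly. Using $T_s(T_t f) = T_{t+s} f = T_t(T_s f)$ and applying the contraction bound to the pair $T_s f, f$, I obtain $\|T_s(T_t f) - T_t f\|_{\infty} = \|T_t(T_s f) - T_t f\|_{\infty} \leq \|T_s f - f\|_{\infty}$. Dividing by $s$ and taking the supremum over $s>0$ gives the quantitative bound $L(T_t f) \leq L(f) < \infty$, which proves both $T_t f \in F_1$ and the inequality $L(T_t f) \leq L(f)$ that the remaining parts exploit. Assertion \eqref{dynkin-1-ii} is then immediate: for $0 \leq t_1 \leq t_2$ I set $h := t_2 - t_1$, write $T_{t_2} f = T_h(T_{t_1} f)$ via the semigroup property, and apply the bound just proved to $T_{t_1} f$ (which lies in $F_1$ by the previous part) to get $\varphi(t_2) = L(T_h(T_{t_1} f)) \leq L(T_{t_1} f) = \varphi(t_1)$. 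For assertion \eqref{dynkin-1-iii}, the definition of $L$ furnishes the time-increment estimate $\|T_h g - g\|_{\infty} \leq h\,L(g)$ for every $g \in F_1$ and $h>0$; taking $g = T_{t_1} f$ and invoking $L(T_{t_1} f) \leq L(f)$ yields $\|T_{t_2} f - T_{t_1} f\|_{\infty} = \|T_h(T_{t_1} f) - T_{t_1} f\|_{\infty} \leq h\,L(f) = |t_2 - t_1|\,L(f)$, which is the claimed global Lipschitz bound with constant $L(f)$.

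I do not expect a genuine obstacle here; the only point requiring care is that the $T_t$ are not linear, so the contraction property cannot simply be quoted but has to be derived from subadditivity through the one-sided inequality $T_t f - T_t g \leq T_t(f-g)$. Once that is in hand, the three statements reduce to straightforward bookkeeping built on the semigroup identity and the resulting contraction estimate.
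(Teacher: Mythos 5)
Your proposal is correct and follows essentially the same route as the paper: the paper's proof is precisely the two one-sided estimates $T_tT_sf-T_tf\leq T_t(T_sf-f)\leq\|T_sf-f\|_{\infty}$ and its counterpart with the roles of $f$ and $T_sf$ reversed, which is exactly the contraction bound you isolate at the outset, applied to the pair $(T_sf,f)$. The only cosmetic difference is that you state the contraction property as a standalone preliminary and spell out the reduction of monotonicity of $\varphi$ to the case $\varphi(t)\leq\varphi(0)$ via part \eqref{dynkin-1-i}, whereas the paper leaves that step implicit.
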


\begin{proof}
	Fix $t>0$ and $f \in F_1$. Since $T_t$ is sublinear and monotone, we have \begin{equation*}
		T_s T_t f-T_t f = T_t T_s f-T_t f \leq T_t (T_s f-f) \leq \|T_s f-f\|_{\infty}
	\end{equation*}
	and \begin{equation*}
		- (T_s T_t f-T_t f) \leq T_t (f-T_s f) \leq \|T_sf-f\|_{\infty}.
	\end{equation*}
	Hence, $\varphi(t) =L(T_t f) \leq L(f)= \varphi(0)$. In particular, $T_t f \in F_1$ and \begin{equation*}
		\|T_{t+s} f-T_t f\|_{\infty} \leq L(T_t f) |s| \leq \varphi(0) |s|, \qquad s\geq 0. \qedhere
	\end{equation*}
\end{proof}
Since $\mc{D}(A) \subseteq F_1$, Lemma~\ref{dynkin-1} shows, in particular, that $t \mapsto T_t f(x)$ is Lipschitz continuous for all $f \in \mc{D}(A)$. It follows from Rademacher's theorem that there exists for each $x \in \mbb{R}^d$ some Lebesgue null set $N=N(x,f) \subseteq [0,\infty)$ such that the limit \begin{equation*}
	\lim_{s \to 0} \frac{T_{t+s} f(x)-T_t f(x)}{s}
\end{equation*}
exists for all $t \in [0,\infty) \backslash N$. For \emph{linear} strongly continuous Markov semigroups $(T_t)_{t \geq 0}$, it can be easily verified that the limit exists for \emph{all} $t \geq 0$ uniformly in $x \in \mbb{R}^d$, and so $T_t(\mc{D}(A)) \subseteq \mc{D}(A)$. This is no longer true for sublinear semigroups: there may be functions $f \in \mc{D}(A)$ such that $T_t f \in \mc{D}(A)$ fails to hold for $t>0$. 

\begin{bsp} \label{dynkin-2} 
	The family of operators \begin{equation*}
		T_t f(x) := \sup_{|s| \leq 1} f(x+s) = \sup_{b \in [-1,1]} f(x+bt), \quad t \geq 0,
	\end{equation*}
	defines a strongly continuous sublinear Markov semigroup on  $\mc{H} := C_b^{\uc}(\mbb{R})$. Moreover, it follows from Taylor's formula that \begin{equation*}
		\lim_{t \to 0} \frac{T_t f(x)-f(x)}{t} = |f'(x)| 
	\end{equation*}
	for all $f \in C_b^{2}(\mbb{R})$, and the convergence is uniformly in $x \in \mbb{R}$. Thus, $C_b^{2}(\mbb{R}) \subseteq \mc{D}(A)$ and $Af = |f'|$ for $f \in C_b^{2}(\mbb{R})$. Take a function $f \in C_b^2(\mbb{R}) \subseteq \mc{D}(A)$ such that $f(x) \in [0,1]$ for all $x \in [-1,1]$, $f(x)=-x$ for $x \in [-2,-1]$ and $f(x)=1$ for $1/2 \leq x \leq 2$. Then \begin{equation*}
		T_t f(0) = \begin{cases} 1, & t \in [1/2,1], \\ t, & t \in [1,2], \end{cases}
	\end{equation*}
	and so $t \mapsto T_t f(0)$ is not differentiable at $t=1$, i.e. $T_1 f \notin \mc{D}(A)$.
\end{bsp}

Let us remark that Denk et al.\ \cite{denk19} showed very recently that the generator of a sublinear semigroup (on a ``nice'' space)  can be extended in such a way that the domain of the extended generator is invariant under $T_t$. If the semigroup is continuous from above, then the extended generator uniquely characterizes the semigroup. \par \medskip

Though $(T_{t+s}f-T_t f)/s$ does not necessarily converge as $s \to 0$, we can show that difference quotient is bounded from above (resp.\ below) by $T_t Af$ (resp.\ $-T_t(-Af)$). This bound for the slope of $t \mapsto T_t f$ is the key for the proof of Dynkin's formula.

\begin{prop} \label{dynkin-3}
	Let $(T_t)_{t \geq 0}$ be a sublinear Markov semigroup on $\mc{H}$ with strong infinitesimal generator $(A,\mc{D}(A))$. If $f \in \mc{D}(A)$, then \begin{equation}
		- T_t (-Af) \leq \liminf_{s \to 0} \frac{T_{t+s}-T_t f}{s} \leq \limsup_{s \to 0} \frac{T_{t+s} f-T_t f}{s} \leq  T_t Af \label{dynkin-eq5}
	\end{equation}
	for all $t \geq 0$. 
\end{prop}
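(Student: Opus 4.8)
The plan is to estimate the right-hand and left-hand difference quotients of $t \mapsto T_t f(x)$ separately and then to combine the resulting four one-sided Dini estimates. Throughout I will use two elementary consequences of the standing assumptions on $(T_t)_{t \geq 0}$: first, the constant-shift inequality $T_t(g+c) \leq T_t g + c$ for $c \geq 0$ (from subadditivity, positive homogeneity and $T_t \I_{\mbb{R}^d} \leq \I_{\mbb{R}^d}$), which together with monotonicity gives the contraction property $\|T_t g - T_t g'\|_{\infty} \leq \|g-g'\|_{\infty}$; and second, that $\mc{D}(A) \subseteq F_1$, so that by Lemma~\ref{dynkin-1}(\ref{dynkin-1-iii}) the map $t \mapsto T_t f(x)$ is globally Lipschitz for every $x$. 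I abbreviate $g_h := h^{-1}(T_h f - f)$ and $\eps_h := \|g_h - Af\|_{\infty}$, noting that $\eps_h \to 0$ as $h \downarrow 0$ since $f \in \mc{D}(A)$.

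For the right-hand quotients ($s \downarrow 0$) the outer operator stays fixed at $T_t$, which makes the estimates routine. Writing $T_{t+s} f = T_t(T_s f)$ and applying subadditivity in both directions gives $-T_t(-g_s) \leq s^{-1}(T_{t+s}f - T_t f) \leq T_t(g_s)$. Since $g_s \to Af$ uniformly and $T_t$ is a contraction, letting $s \downarrow 0$ yields $\limsup_{s \downarrow 0} \leq T_t Af$ and $\liminf_{s \downarrow 0} \geq -T_t(-Af)$.

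The left-hand quotients ($s \uparrow 0$) are the delicate part, because the natural outer operator is $T_{t+s}$, which moves with $s$. The starting point is the sharper remark that the one-sided bound holds for every step size: for all $\tau \geq 0$ and $h>0$, subadditivity gives $-T_\tau(-g_h) \leq h^{-1}(T_{\tau+h}f - T_\tau f) \leq T_\tau(g_h)$, whence, via the constant-shift inequality, $-T_\tau(-Af) - \eps_h \leq h^{-1}(T_{\tau+h}f - T_\tau f) \leq T_\tau(Af) + \eps_h$. Specializing to $\tau = t-h$ rewrites the left difference quotient at $t$ as a forward difference quotient at $t-h$, so that $-T_{t-h}(-Af) - \eps_h \leq h^{-1}(T_t f - T_{t-h}f) \leq T_{t-h}(Af) + \eps_h$. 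To pass to the limit $h \downarrow 0$ I need the continuity of $\tau \mapsto T_\tau(Af)(x)$ and of $\tau \mapsto T_\tau(-Af)(x)$.

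This continuity is the key point, and it comes for free from what is already available: the functions $\tau \mapsto T_\tau(g_h)(x) = h^{-1}(T_{\tau+h}f(x) - T_\tau f(x))$ are continuous in $\tau$ by Lemma~\ref{dynkin-1}(\ref{dynkin-1-iii}), and by the contraction property they converge to $\tau \mapsto T_\tau(Af)(x)$ uniformly in $\tau$ (with error $\eps_h$) as $h \downarrow 0$; a uniform limit of continuous functions is continuous, and the same argument applies to $-Af$. Granting this, letting $h \downarrow 0$ in the displayed left-quotient bounds gives $\limsup_{s \uparrow 0} \leq T_t Af$ and $\liminf_{s \uparrow 0} \geq -T_t(-Af)$. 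Combining the one-sided estimates from both directions with the trivial inequality $\liminf \leq \limsup$ yields \eqref{dynkin-eq5}. The main obstacle is exactly this interchange of limits on the left; once the continuity of $\tau \mapsto T_\tau(\pm Af)$ is secured through the uniform-approximation argument, the rest is bookkeeping with subadditivity.
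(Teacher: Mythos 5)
Your treatment of the right-hand quotients ($s\downarrow 0$) is precisely the paper's argument: decompose $T_{t+s}f=T_t(T_sf)$, squeeze the difference quotient between $-T_t(-g_s)$ and $T_t(g_s)$ by subadditivity, and pass to the limit using monotonicity and $T_t\I_{\mbb{R}^d}\leq\I_{\mbb{R}^d}$. That half is correct and is all the paper itself actually proves: the displayed chain in the paper's proof uses $T_{t+s}=T_tT_s$ and divides by $s$ via positive homogeneity, both of which presuppose $s>0$, and the right-hand bound at Lebesgue-a.e.\ $t$ is also all that is needed downstream in the proof of Theorem~\ref{dynkin-9} (at a point where the two-sided derivative exists it coincides with the right derivative).

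Your left-hand argument, however, has a genuine gap. The identity on which the whole continuity step rests, namely $T_\tau(g_h)(x)=h^{-1}\bigl(T_{\tau+h}f(x)-T_\tau f(x)\bigr)$, is false for sublinear operators: positive homogeneity gives $T_\tau(g_h)=h^{-1}T_\tau(T_hf-f)$, and subadditivity yields only $T_\tau(T_hf-f)\geq T_\tau T_hf-T_\tau f$, so in general $T_\tau(g_h)(x)\geq h^{-1}(T_{\tau+h}f(x)-T_\tau f(x))$ with strict inequality possible. Hence $\tau\mapsto T_\tau(g_h)(x)$ is not exhibited as a difference of Lipschitz functions, the uniform-limit argument does not deliver continuity of $\tau\mapsto T_\tau(Af)(x)$, and the passage $T_{t-h}(\pm Af)(x)\to T_t(\pm Af)(x)$ as $h\downarrow 0$ is unjustified. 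The natural way to control $|T_{t-h}(Af)(x)-T_t(Af)(x)|$ is through $\|T_h(Af)-Af\|_\infty$, i.e.\ strong continuity of the semigroup at $Af$ --- but this is not among the hypotheses and does not follow from $f\in\mc{D}(A)$ (which only gives $\|T_hf-f\|_\infty\to 0$, not $\|T_h(Af)-Af\|_\infty\to 0$). So either restrict the claim to $s\downarrow 0$, as the paper implicitly does and as suffices for Dynkin's formula, or supply an additional hypothesis/argument for the left quotients; as written, that step does not close.
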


\begin{proof}
	Fix $f \in \mc{D}(A)$ and $t \geq 0$. By the semigroup property and subadditivity of $(T_t)_{t \geq 0}$, we have \begin{equation*}
		\frac{T_{t+s} f-T_t f}{s} - T_t Af \leq T_t \left( \frac{T_s f-f}{s} - Af \right).
	\end{equation*}
	Since $T_t$ is monotone and $T_t 1\leq 1$, this gives \begin{equation*}
		\frac{T_{t+s} f-T_t f}{s}- T_t Af  \leq \left\| \frac{T_s f-f}{s}-Af \right\|_{\infty} \xrightarrow[]{s \to 0} 0.
	\end{equation*}
	Hence, \begin{equation*}
		\limsup_{s \to 0} \frac{T_{t+s}f-T_t f}{s} \leq T_t Af.
	\end{equation*}
	On the other hand, it follows from the subadditivity of $T_t$ that \begin{equation*}
		T_t f \leq T_t (f-T_s f) + T_t T_s f,
	\end{equation*}
	and so \begin{equation*}
		\frac{T_{t+s} f-T_t f}{s} + T_t (-Af) 
		\geq - \frac{T_t (f-T_s f)}{s} + T_t (-Af).
	\end{equation*}
	Using \begin{equation*}
		T_t \left( \frac{f-T_s f}{s} \right) \leq T_t \left( \frac{f-T_s f}{s} + Af \right) + T_t (-Af)
	\end{equation*}
	we find that \begin{equation*}
		\frac{T_{t+s} f-T_t f}{s} + T_t (-Af) 
		\geq - T_t \left( \frac{f-T_s f}{s} + Af \right) 
		\geq - \left\| \frac{f-T_s f}{s} + Af \right\|_{\infty}
		\xrightarrow[]{s \to 0} 0. \qedhere
	\end{equation*}
\end{proof}

\begin{thm}[Dynkin's formula] \label{dynkin-9}
	Let $(T_t)_{t \geq 0}$ be a sublinear Markov semigroup on $\mc{H}$ with strong infinitesimal generator $(A,\mc{D}(A))$. If $f \in \mc{D}(A)$, then \begin{equation}
			- \int_0^t T_s (-Af) \, ds \leq T_t f-f \leq \int_0^t T_s (Af) \, ds \fa t \geq 0. \label{dynkin-eq21}
		\end{equation}
\end{thm}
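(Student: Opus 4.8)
The plan is to fix $x \in \mbb{R}^d$ and $t \geq 0$ and to study the real-valued function $u(s) := T_s f(x)$ on $[0,t]$. Since $\mc{D}(A) \subseteq F_1$, Lemma~\ref{dynkin-1}(iii) tells us that $u$ is globally Lipschitz continuous, hence absolutely continuous, so the fundamental theorem of calculus for Lipschitz functions gives
\begin{equation*}
	T_t f(x) - f(x) = u(t) - u(0) = \int_0^t u'(s) \, ds,
\end{equation*}
where $u'$ exists for Lebesgue-a.e.\ $s \in [0,t]$ and satisfies $|u'| \le L(f)$. At every such $s$ the limit defining $u'(s)$ coincides with both the $\liminf$ and the $\limsup$ appearing in Proposition~\ref{dynkin-3}, so that
\begin{equation*}
	-T_s(-Af)(x) \leq u'(s) \leq T_s(Af)(x) \qquad \text{for a.e.\ } s \in [0,t].
\end{equation*}
Integrating this chain over $[0,t]$ produces exactly \eqref{dynkin-eq21}, provided the two bounding functions $s \mapsto T_s(Af)(x)$ and $s \mapsto T_s(-Af)(x)$ are integrable there.

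The main obstacle is precisely this integrability. Boundedness is cheap: since each $T_s$ is monotone, positively homogeneous and satisfies $T_s 1 \le 1$, it is nonexpansive on $(\mc{H},\|\cdot\|_{\infty})$, whence $|T_s(\pm Af)(x)| \le \|Af\|_{\infty}$ uniformly in $s$. What must be secured is measurability, and here sublinearity genuinely interferes: unlike in the linear case, $T_s$ does not commute with the difference $T_h f - f$, so one cannot identify the integrand with a limit of the difference quotients $s \mapsto s^{-1}(T_{s+h}f(x) - T_s f(x))$. The plan is instead to reduce measurability to orbit continuity. By the semigroup property and nonexpansiveness,
\begin{equation*}
	|T_{s+r}(Af)(x) - T_s(Af)(x)| \le \|T_r(Af) - Af\|_{\infty} \qquad (r \ge 0),
\end{equation*}
and likewise with $-Af$; it therefore suffices to show the strong continuity $\|T_r(Af) - Af\|_{\infty} \to 0$ as $r \downarrow 0$, which makes both integrands continuous, hence Riemann integrable.

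To establish this continuity I would exploit the Favard space once more. Writing $Af = \lim_{h \downarrow 0} \phi_h$ with $\phi_h := h^{-1}(T_h f - f)$ and uniform convergence, and using nonexpansiveness to absorb the outer terms, the task localises to $\|T_r \phi_h - \phi_h\|_{\infty} \to 0$ for the fixed approximants $\phi_h$. Since $f \in \mc{D}(A) \subseteq F_1$ and $T_h f \in F_1$ by Lemma~\ref{dynkin-1}(i), the orbit estimate of Lemma~\ref{dynkin-1}(iii) yields $\|T_r \phi_h - \phi_h\|_{\infty} \le L(\phi_h)\, r \to 0$, once one checks that $\phi_h$ (equivalently $T_h f - f$) again lies in $F_1$. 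This last point is the delicate one: it requires a two-sided Favard estimate for $T_h f - f$ rather than merely the memberships $f, T_h f \in F_1$, and I expect this Favard-space bookkeeping to be the technical heart of the argument.

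Finally, as an elementary alternative that bypasses Rademacher's theorem and reaches the same two bounds, I would partition $[0,t]$ as $0 = t_0 < \dots < t_n = t$ with mesh $\delta$, telescope $T_t f - f = \sum_k (T_{t_{k+1}} f - T_{t_k} f)$, and bound each increment by subadditivity together with the first-order expansion $T_{\Delta_k} f - f = \Delta_k\, Af + \Delta_k r_k$, where $\Delta_k = t_{k+1}-t_k$ and $\|r_k\|_{\infty} \le \rho(\delta) \to 0$; this gives
\begin{equation*}
	-\sum_k \Delta_k\, T_{t_k}(-Af)(x) - t\rho(\delta) \le T_t f(x) - f(x) \le \sum_k \Delta_k\, T_{t_k}(Af)(x) + t\rho(\delta),
\end{equation*}
and letting $\delta \downarrow 0$ identifies the outer Riemann sums with the desired integrals. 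Either route makes clear that the decisive ingredient is the continuity, and hence integrability, of $s \mapsto T_s(\pm Af)(x)$.
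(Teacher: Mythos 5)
Your main argument is exactly the paper's: Lipschitz continuity of $s\mapsto T_sf(x)$ from Lemma~\ref{dynkin-1}, Rademacher's theorem plus the fundamental theorem of calculus to write $T_tf(x)-f(x)=\int_0^t u'(s)\,ds$, and the sandwich of $u'(s)$ between $-T_s(-Af)(x)$ and $T_s(Af)(x)$ supplied by Proposition~\ref{dynkin-3}. That core is correct and is carried out to the same standard as the paper, which likewise integrates the a.e.\ inequality without further comment.

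The extra material on integrability of $s\mapsto T_s(\pm Af)(x)$ addresses a point the paper leaves implicit, but your proposed repair does not close. To use $\|T_r\phi_h-\phi_h\|_\infty\le L(\phi_h)\,r$ you need $\phi_h=h^{-1}(T_hf-f)\in F_1$, and subadditivity only yields
\begin{equation*}
	T_{r+h}f-T_rf-(T_hf-f)\;\le\; T_r(T_hf-f)-(T_hf-f)\;\le\;\bigl[T_r(-f)-(-f)\bigr]+\bigl[T_{r+h}f-T_hf\bigr],
\end{equation*}
so the upper bound requires $-f\in F_1$, i.e.\ a Favard (or strong continuity) estimate for the orbit of $-f$; neither $-f\in F_1$ nor $-f\in\mc{D}(A)$ follows from $f\in\mc{D}(A)$ for a sublinear semigroup, since $\mc{D}(A)$ and $F_1$ need not be closed under negation. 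You flag this yourself, and it is a genuine obstruction rather than bookkeeping; the same issue affects your Riemann-sum alternative, which needs a.e.\ continuity of $s\mapsto T_s(Af)(x)$ to identify the sums with integrals. The cleanest way out is not to prove continuity at all: the upper right Dini derivative $D^+u(s)$ of the Lipschitz orbit is Borel measurable, bounded by $L(f)$, integrates to $u(t)-u(0)$ (being equal to $u'$ a.e.), and is dominated pointwise by $T_s(Af)(x)$ for \emph{every} $s$ by Proposition~\ref{dynkin-3}; measurability of the outer bounds is then part of the meaning of \eqref{dynkin-eq21} (or the outer integrals are read as lower and upper integrals), which is exactly the level of precision at which the paper states and proves the theorem.
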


\begin{proof}
	Fix $f \in \mc{D}(A)$. By Lemma~\ref{dynkin-1}, $t \mapsto T_t f(x)$ is globally Lipschitz continuous for all $x \in \mbb{R}^d$, and therefore it follows from Rademacher's theorem that there exists a mapping $g(t,x)$ such that \begin{equation*}
		T_t f(x)-f(x) = \int_0^t g(s,x) \, ds
	\end{equation*}
	and \begin{equation*}
		\frac{d}{dt} T_t f(x) =g(t,x)
	\end{equation*}
	for Lebesgue almost every $t \geq 0$ (the exceptional null set may depend on $x \in \mbb{R}^d$). Dynkin's formula \eqref{dynkin-eq21} is now an immediate consequence of Proposition~\ref{dynkin-3}.
\end{proof}

\begin{bem} \label{dynkin-11} \begin{enumerate}[wide, labelwidth=!, labelindent=0pt]
	\item If $(T_t)_{t \geq 0}$ is a Markov semigroup of \emph{linear} operators, then we recover the classical Dynkin formula: \begin{equation*}
		T_t f-f = \int_0^t T_s Af \, ds, \quad f \in \mc{D}(A),\, t \geq 0.
	\end{equation*}
	\item In general, the inequalities in \eqref{dynkin-eq21} are strict. Consider, for instance, \begin{equation*}
		T_t f(x) = \sup_{b \in [-1,1]} f(x+bt) \qquad Af = |f'|
	\end{equation*}
	(see Example~\ref{dynkin-2}), then \begin{equation*}
		T_t f(x)-f(x) = \sup_{b \in [-1,1]} \int_0^{bt} f'(x+r) \, dr
	\end{equation*}
	is, in general, strictly smaller than \begin{equation*}
		\int_0^t T_s Af(x) \, ds = \int_0^t \sup_{b \in [-1,1]} |f'(x+bs)| \, ds,
	\end{equation*}
	e.g.\ if $f'$ has strict maximum in $x$. 
	\item In Section~\ref{lk} we will identify $A|_{C_c^{\infty}(\mbb{R}^d)}$ under the assumption that $C_c^{\infty}(\mbb{R}^d) \subseteq \mc{D}(A)$. In combination with Dynkin's formula \eqref{dynkin-eq21}, this gives a useful tool to establish probability estimate for Markov processes on sublinear spaces, e.g.\ estimates for fractional moments. 
\end{enumerate}
\end{bem}

As a direct consequence of Dynkin's formula we obtain the following corollary; see e.g.\ \cite[Lemma 2.3]{rs-cons} for the counterpart in the framework of linear semigroups.

\begin{kor} \label{dynkin-13}
	Let $(T_t)_{t \geq 0}$ be a sublinear Markov semigroup on $\mc{H}$ with strong infinitesimal generator $(A,\mc{D}(A))$. If $f \in \mc{D}(A)$, then \begin{equation}
		\|Af\|_{\infty} = \sup_{t>0} \frac{\|T_t f-f\|_{\infty}}{t}. \label{dynkin-eq25}
	\end{equation}
	In particular, \begin{equation}
		\|T_{t} f-T_s f\|_{\infty} \leq \|Af\|_{\infty} |t-s|, \qquad s,t \geq 0. \label{dynkin-eq27}
	\end{equation}
\end{kor}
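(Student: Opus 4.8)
The plan is to establish the identity \eqref{dynkin-eq25} by proving the two inequalities separately and then to deduce \eqref{dynkin-eq27} from it. Write $L(f) := \sup_{t>0} \|T_t f - f\|_{\infty}/t$ for the Favard seminorm of \eqref{favard}. The bound $\|Af\|_{\infty} \leq L(f)$ is immediate from the definition of the strong generator: since $(T_t f - f)/t \to Af$ in $\|\cdot\|_{\infty}$ as $t \downarrow 0$, we have $\|Af\|_{\infty} = \lim_{t \downarrow 0} \|(T_t f - f)/t\|_{\infty}$, and every term of this limit is bounded above by the supremum $L(f)$.

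For the reverse inequality $L(f) \leq \|Af\|_{\infty}$ I would invoke the sublinear Dynkin formula (Theorem~\ref{dynkin-9}). The one ingredient that needs checking is that each operator $T_s$ is a contraction in the supremum norm, i.e.\ $\|T_s h\|_{\infty} \leq \|h\|_{\infty}$ for $h \in \mc{H}$. The upper estimate $T_s h \leq \|h\|_{\infty}$ follows from monotonicity and positive homogeneity together with $T_s \I_{\mbb{R}^d} \leq \I_{\mbb{R}^d}$; for the lower estimate $T_s h \geq -\|h\|_{\infty}$ positive homogeneity is unavailable for the negative factor, so I would argue via subadditivity, using $0 = T_s 0 \leq T_s h + T_s(-h)$ together with $T_s(-h) \leq \|{-h}\|_{\infty} = \|h\|_{\infty}$ to conclude $T_s h \geq -\|h\|_{\infty}$. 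Applying this contraction property to $h = Af$ and $h = -Af$ inside the two integrals of \eqref{dynkin-eq21} bounds each integrand in sup norm by $\|Af\|_{\infty}$, so that
\begin{equation*}
	-t \|Af\|_{\infty} \leq T_t f - f \leq t \|Af\|_{\infty}, \qquad t \geq 0.
\end{equation*}
Taking the supremum norm and dividing by $t$ gives $\|T_t f - f\|_{\infty}/t \leq \|Af\|_{\infty}$ for every $t>0$, hence $L(f) \leq \|Af\|_{\infty}$; combining the two inequalities yields \eqref{dynkin-eq25}.

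Finally, \eqref{dynkin-eq27} follows at once. The first part shows $L(f) = \|Af\|_{\infty} < \infty$, so $f \in F_1$, and Lemma~\ref{dynkin-1}(iii) then guarantees that $t \mapsto T_t f(x)$ is globally Lipschitz with constant $L(f) = \|Af\|_{\infty}$ for every $x \in \mbb{R}^d$; passing to the supremum over $x$ gives the stated Lipschitz bound. I expect the only genuine obstacle to be the lower half of the contraction estimate for $T_s$, where one cannot use positive homogeneity with a negative scalar and must instead fall back on subadditivity and $T_s 0 = 0$; everything else is a direct reading of the definitions and of Dynkin's formula.
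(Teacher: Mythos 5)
Your proof is correct and follows essentially the same route as the paper: Dynkin's formula (Theorem~\ref{dynkin-9}) combined with the sup-norm bound $T_s(\pm Af) \leq \|Af\|_{\infty}$ yields $\|T_t f - f\|_{\infty} \leq t\|Af\|_{\infty}$, the definition of the strong generator gives the reverse inequality, and Lemma~\ref{dynkin-1} supplies the Lipschitz estimate \eqref{dynkin-eq27}. The only cosmetic difference is that your two-sided contraction estimate (the subadditivity argument for $T_s h \geq -\|h\|_{\infty}$) is slightly more than is needed, since the lower half of \eqref{dynkin-eq21} already has the form $-\int_0^t T_s(-Af)\,ds$, so only the upper estimates $T_s(Af) \leq \|Af\|_{\infty}$ and $T_s(-Af) \leq \|Af\|_{\infty}$ --- both immediate from monotonicity, positive homogeneity and $T_s\I_{\mbb{R}^d} \leq \I_{\mbb{R}^d}$ --- are actually required.
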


\begin{proof}
	Since $T_t$ is monotone for each $t \geq 0$, it follows from Dynkin's formula \eqref{dynkin-eq21} that \begin{equation*}
		\|T_t f-f\|_{\infty} \leq t \|Af\|_{\infty}.
	\end{equation*}
	On the other hand, the very definition of $A$ gives \begin{equation*}
		\|Af\|_{\infty} = \lim_{t \to 0} \frac{\|T_t f-f\|_{\infty}}{t},
	\end{equation*}
	and this proves \eqref{dynkin-eq25}. From Lemma~\ref{dynkin-1}\eqref{dynkin-1-ii}, we get the Lipschitz estimate \eqref{dynkin-eq27}.
\end{proof}
	
\section{Courr\`ege-von Waldenfels theorem for sublinear operators} \label{lk}

Let $A: \mc{D}(A) \to \mc{B}_b(\mbb{R}^d)$ be a linear operator satisfying the positive maximum principle, i.e. \begin{equation}
	f \in \mc{D}(A), \, f(x_0) = \sup_{x \in \mbb{R}^d} f(x) \geq 0 \implies Af(x_0) \leq 0. \label{pmp}
\end{equation}
If $C_c^{\infty}(\mbb{R}^d) \subseteq \mc{D}(A)$, then $A|_{C_c^{\infty}(\mbb{R}^d)}$ has a representation of the form \begin{align*}\begin{aligned}
	Af(x) &= -c(x) f(x) + b(x) \cdot \nabla f(x) + \frac{1}{2} \tr(Q(x) \cdot \nabla^2 f(x)) \\ &\quad + \int_{y \neq 0} \left( f(x+y)-f(x)-y \cdot \nabla f(x) \I_{(0,1)}(|y|) \right) \, \nu(x,dy)\end{aligned} \end{align*}
where $c(x) \geq 0$, $b(x) \in \mbb{R}^d$, $Q(x) \in \mbb{R}^{d \times d}$ is positive semidefinite and $\nu(x,dy)$ is a measure such that $\int_{y \neq 0} \min\{1,|y|^2\} \, \nu(x,dy)<\infty$; this result is due to Courr\`ege \cite{courrege} and von Waldenfels \cite{wald1,wald2}. Since infinitesimal generators of Markov processes satisfy the positive maximum principle, this gives immediately a representation formula for infinitesimal generators with a sufficiently rich domain, cf.\ \cite[Theorem 2.21]{ltp} or \cite{jacob123}. Recently, the result by Courr\`ege and von Waldenfels was generalized to Lie groups, cf.\ \cite{apple19}. In this section, we establish the following Courr\`ege-von Waldenfels theorem for \emph{sub}linear operators satisfying the positive maximum principle.  

\begin{thm} \label{lk-1}
	Let $A: \mc{D}(A) \to \mc{B}_b(\mbb{R}^d)$ be a sublinear operator with $\mc{D}(A) \subseteq \mc{B}_b(\mbb{R}^d)$. Assume that $A$ satisfies the positive maximum principle \eqref{pmp}. If $C_c^{\infty}(\mbb{R}^d) \subseteq \mc{D}(A)$, then there exist an index set $I$ and a family $(c_{\theta}(x),b_{\theta}(x),Q_{\theta}(x),\nu_{\theta}(x,dy))$, $\theta \in I$, $x \in \mbb{R}^d$, of characteristics such that \begin{align} \label{lk-eq6}
		Af(x) &= \sup_{\theta \in I} A_{\theta} f(x), \qquad f \in C_c^{\infty}(\mbb{R}^d), \, x \in \mbb{R}^d,
	\end{align}
	where \begin{align*} \begin{aligned} A_{\theta} f(x) &:= - c_{\theta}(x) f(x) + b_{\theta}(x) \cdot \nabla f(x) + \frac{1}{2} \tr(Q_{\theta}(x) \cdot \nabla^2 f(x)) \\ &\qquad + \int_{y \neq 0} (f(x+y)-f(x)-y \cdot \nabla f(x) \I_{(0,1)}(|y|)) \, \nu_{\theta}(x,dy). \end{aligned}
	\end{align*}
	The supremum is attained, i.e.\ for any $f \in C_c^{\infty}(\mbb{R}^d)$ and $x \in \mbb{R}^d$ there exists $\theta=\theta(f,x)$ such that $Af(x) = A_{\theta} f(x)$. For each $x \in \mbb{R}^d$ the family $(c_{\theta}(x),b_{\theta}(x),Q_{\theta}(x),\nu_{\theta}(x,dy))$, $\theta \in I$, is uniformly bounded, i.e. \begin{equation}
			\sup_{\theta \in I} \left( |c_{\theta}(x)|  + |b_{\theta}(x)| + |Q_{\theta}(x)| + \int_{y \neq 0} \min\{1,|y|^2\} \, \nu_{\theta}(x,dy) \right)<\infty. \label{lk-eq7}
		\end{equation}
\end{thm}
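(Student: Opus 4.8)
The plan is to argue pointwise in $x$ and to exploit the duality between sublinear and linear functionals. Fix $x\in\mbb{R}^d$ and consider $p_x\colon C_c^\infty(\mbb{R}^d)\to\mbb{R}$, $p_x(f):=Af(x)$. Since $A$ is subadditive and positively homogeneous, $p_x$ is a sublinear functional on the real vector space $C_c^\infty(\mbb{R}^d)$, and $p_x(0)=0$ forces $p_x(f)+p_x(-f)\geq p_x(0)=0$. Let $\mc{L}_x$ denote the set of all linear functionals $\ell\colon C_c^\infty(\mbb{R}^d)\to\mbb{R}$ with $\ell\leq p_x$. By Hahn--Banach, for every fixed $f_0$ the map $\lambda f_0\mapsto\lambda p_x(f_0)$ is dominated by $p_x$ on $\mathrm{span}(f_0)$ (using positive homogeneity for $\lambda\geq0$ and $p_x(f_0)+p_x(-f_0)\geq0$ for $\lambda<0$), hence extends to some $\ell\in\mc{L}_x$ with $\ell(f_0)=p_x(f_0)$. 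Consequently $p_x(f)=\sup_{\ell\in\mc{L}_x}\ell(f)$, with the supremum attained for each $f$.

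Next I would observe that every $\ell\in\mc{L}_x$ satisfies the positive maximum principle at the single point $x$: if $f(x)=\sup_z f(z)\geq0$, then $\ell(f)\leq p_x(f)=Af(x)\leq0$ by \eqref{pmp}. The classical Courr\`ege--von Waldenfels theorem, in its pointwise formulation for linear functionals, then supplies characteristics $(c_\ell,b_\ell,Q_\ell,\nu_\ell)$ with $c_\ell\geq0$, $Q_\ell$ positive semidefinite and $\nu_\ell$ a L\'evy measure, such that $\ell(f)$ coincides with the right-hand side of \eqref{pseudo2} evaluated at $x$, for all $f\in C_c^\infty(\mbb{R}^d)$. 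To assemble a single index set I would take $I:=\prod_{x\in\mbb{R}^d}\mc{L}_x$ (nonempty by the axiom of choice, since each $\mc{L}_x\neq\emptyset$) and define, for $\theta=(\theta_x)_{x\in\mbb{R}^d}\in I$, the family $(c_\theta(x),b_\theta(x),Q_\theta(x),\nu_\theta(x,dy))$ to be the characteristics of $\theta_x$. Then $A_\theta f(x)=\theta_x(f)$, whence $\sup_{\theta\in I}A_\theta f(x)=\sup_{\ell\in\mc{L}_x}\ell(f)=Af(x)$, which is \eqref{lk-eq6}; attainment follows by choosing $\theta$ whose $x$-coordinate is a support functional of $p_x$ at the given $f$.

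The real work, and the step I expect to be the main obstacle, is the uniform bound \eqref{lk-eq7}. Here I would use crucially that $Af\in\mc{B}_b(\mbb{R}^d)$ is finite, together with a handful of fixed test functions centred at $x$. Testing against $\rho\in C_c^\infty$ with $\rho\geq0$, $\rho(x)=0$, $\nabla\rho(x)=0$, $\nabla^2\rho(x)=2\,\mathrm{Id}$ and $\rho(z)=|z-x|^2$ near $x$ gives $\ell(\rho)=\tr Q_\ell+\int\rho(x+y)\,\nu_\ell(dy)$, so both $\tr Q_\ell$ (hence $|Q_\ell|$, by positive semidefiniteness) and $\int_{0<|y|<1}|y|^2\,\nu_\ell(dy)$ are bounded by $A\rho(x)$; testing against functions with prescribed first-order behaviour at $x$ bounds $|b_\ell|$ similarly. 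The delicate term is the tail $\nu_\ell(\{|y|\geq1\})$, which I would read off from a single function $g\in C_c^\infty$ with $g(x)=1$, $\nabla g(x)=0$, $\nabla^2 g(x)=0$, $0\leq g\leq1$ and $\mathrm{supp}\,g\subseteq B(x,1)$: the L\'evy--Khintchine form then yields $\ell(g)=-c_\ell-\nu_\ell(\{|y|\geq1\})+E_\ell$, where $E_\ell=\int_{0<|y|<1}(g(x+y)-1)\,\nu_\ell(dy)$ satisfies $|E_\ell|\leq C\int_{0<|y|<1}|y|^2\,\nu_\ell(dy)$ by Taylor's formula and is thus already controlled.

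Since $c_\ell\geq0$, this identity gives both $\nu_\ell(\{|y|\geq1\})\leq A(-g)(x)+C$ and $c_\ell\leq A(-g)(x)+C$. As all the right-hand sides are values of $A$ at $x$ on fixed functions in $C_c^\infty(\mbb{R}^d)$ -- hence finite and independent of $\ell$ -- taking the supremum over $\ell\in\mc{L}_x$ yields \eqref{lk-eq7}. The subtle point is precisely this uniform control of $\nu_\ell(\{|y|\geq1\})$: a naive annulus test function bounds $\nu_\ell$ only on compact subsets of $\{y\neq0\}$, with a constant that degenerates as the radius grows, whereas the function $g$ above, equal to $1$ at the base point, captures the entire tail through the constant term $-g(x)=-1$ appearing in the integrand over $\{|y|\geq1\}$.
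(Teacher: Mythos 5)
Your proposal is correct and follows essentially the same route as the paper: fix $x$, represent the sublinear functional $f\mapsto Af(x)$ as the attained supremum of the linear functionals it dominates via Hahn--Banach, observe that each such functional inherits the positive maximum principle at $x$ and hence has a Courr\`ege--von Waldenfels form, and then obtain the uniform bound \eqref{lk-eq7} by testing against a cutoff equal to $1$ at $x$ (for $c$ and the tail of $\nu$) and against quadratic/linear functions localized at $x$ (for $Q$, the small jumps, and $b$). The only cosmetic differences are your product index set $\prod_x \mc{L}_x$ in place of the paper's union $\bigcup_x\Theta(x)$, and your Taylor estimate of the error term $E_\ell$, which the paper avoids by using a cutoff $\chi$ with $1-\chi\geq 0$ vanishing near $x$.
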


Since the generator of a sublinear Markov semigroup satisfies the positive maximum principle, Theorem~\ref{lk-1} gives, in particular, a representation formula for sublinear generators whose domains contain $C_c^{\infty}(\mbb{R}^d)$, see Corollary~\ref{lk-3} below. If $A$ is a \emph{linear} operator, then the index set $I$ consists of a single element, and we recover the classical Courr\`ege-von Waldenfels theorem. \par
To prove Theorem~\ref{lk-1}, we need a representation result for sublinear functionals. We say that a functional $B: \mc{D} \to \mbb{R}$ defined on a subspace $\mc{D}$ of functions $f: \mbb{R}^d \to \mbb{R}$ \emph{satisfies the positive maximum principle in $x_0 \in \mbb{R}^d$} if $f \in \mc{D}$, $f(x_0)= \sup_{x \in \mbb{R}^d} f(x) \geq 0$ implies $Bf \leq 0$.

\begin{lem} \label{lk-5}
	Let $B: \mc{D} \to \mbb{R}$ be a sublinear functional on a linear space $\mc{D} \subseteq \mc{B}_b(\mbb{R}^d)$. If  $B$ satisfies the positive maximum principle in some point $x_0 \in \mbb{R}^d$,  then there exists a family $(B_{\theta})_{\theta \in \Theta}$ of linear functionals on $\mc{D}$ satisfying the positive maximum principle in $x_0$ such that \begin{equation*}
		Bf = \sup_{\theta \in \Theta} B_{\theta} f, \qquad f \in \mc{D}.
	\end{equation*}
	The supremum is attained, i.e.\ for every $f \in \mc{D}$ there exists some $\theta=\theta(f) \in \Theta$ such that $Bf = B_{\theta} f$. 
\end{lem}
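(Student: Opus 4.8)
The plan is to recognize $B$ as a sublinear functional in the sense of functional analysis --- positively homogeneous and subadditive --- and to produce the family $(B_\theta)$ by a pointwise application of the Hahn--Banach theorem, constructing one linear minorant of $B$ for each $f_0 \in \mc{D}$ that touches $B$ at $f_0$. No topology on $\mc{D}$ is needed, so the purely algebraic version of Hahn--Banach on the real vector space $\mc{D}$ suffices.

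First I would fix $f_0 \in \mc{D}$ and define a linear functional $\ell$ on the one-dimensional subspace $\mbb{R} f_0$ by $\ell(\lambda f_0) := \lambda B(f_0)$. I would check that $\ell \leq B$ on $\mbb{R} f_0$: for $\lambda \geq 0$ positive homogeneity gives $\ell(\lambda f_0) = \lambda B(f_0) = B(\lambda f_0)$, while for $\lambda < 0$ subadditivity yields $0 = B(0) \leq B(f_0) + B(-f_0)$, hence $B(-f_0) \geq -B(f_0)$ and therefore $B(\lambda f_0) = |\lambda| B(-f_0) \geq -|\lambda| B(f_0) = \lambda B(f_0) = \ell(\lambda f_0)$. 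The Hahn--Banach theorem then extends $\ell$ to a linear functional $B_{f_0}$ on all of $\mc{D}$ with $B_{f_0} \leq B$ pointwise and $B_{f_0}(f_0) = B(f_0)$.

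Next I would take $\Theta := \mc{D}$ together with the family $(B_{f_0})_{f_0 \in \mc{D}}$. Since $B_{f_0} \leq B$ for every $f_0$, we have $\sup_{f_0} B_{f_0} f \leq B f$ for all $f \in \mc{D}$, and choosing the index $f_0 = f$ gives $B_f(f) = B(f)$; hence $Bf = \sup_{f_0} B_{f_0} f$ with the supremum attained at $\theta(f) = f$. It then remains to verify that each $B_{f_0}$ satisfies the positive maximum principle in $x_0$, and this is where the domination $B_{f_0} \leq B$ does the work: if $f(x_0) = \sup_{x \in \mbb{R}^d} f(x) \geq 0$, then $B_{f_0} f \leq B f \leq 0$ because $B$ itself satisfies the principle in $x_0$.

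The argument is essentially complete once Hahn--Banach is set up, so there is no serious obstacle; the only points requiring care are confirming that $B$ is genuinely sublinear in the Hahn--Banach sense (which forces $B(0) = 0$ and the one-dimensional estimate above) and observing that the positive maximum principle is \emph{automatically} inherited by any linear minorant dominated by $B$, rather than having to be engineered into the extension. In particular the specific choice made by Hahn--Banach is irrelevant for the principle, so any minorant of $B$ that is exact at $f_0$ serves the purpose.
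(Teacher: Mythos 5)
Your proposal is correct and follows essentially the same route as the paper: both produce the family of linear minorants of $B$ via the (algebraic) Hahn--Banach theorem, with the supremum attained because the extension touches $B$ at the chosen $f_0$, and both deduce the positive maximum principle for each $B_\theta$ directly from the domination $B_\theta \leq B$. The only cosmetic difference is that the paper indexes the family by the set of \emph{all} linear minorants of $B$ (citing Hollender for the Hahn--Banach details you write out explicitly), whereas you index by $\mc{D}$ itself; this changes nothing of substance.
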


It was shown in \cite[Theorem 3.5]{julian} that any sublinear functional $B$ on a linear space has a representation of the form $Bf = \sup_{\theta \in \Theta} B_{\theta} f$ for a family of linear functionals. For our application it is crucial to have the positive maximum principle for $B_{\theta}$.

\begin{proof}[Proof of Lemma~\ref{lk-5}]
	Set $\Theta := \{\theta: \mc{D} \to \mbb{R}; \theta$ is linear and $\theta \leq B\}$ and $B_{\theta} := \theta$ for $\theta \in \Theta$. An application of the Hahn-Banach theorem shows that \begin{equation*}
		Bf = \sup_{\theta \in \Theta} B_{\theta} f, \qquad f \in \mc{D},
	\end{equation*}
	and the supremum is attained, see \cite[Proof of Theorem 3.5]{julian} for details. Now let $f \in \mc{D}$ such that $f(x_0) = \sup_{x \in \mbb{R}^d} f(x) \geq 0$. By assumption, \begin{equation*}
		0 \geq Bf = \sup_{\theta \in \Theta} B_{\theta} f,
	\end{equation*}
	and so $B_{\theta} f \leq 0$ for all $\theta \in \Theta$, i.e.\ $B_{\theta}$ satisfies the positive maximum principle in $x_0 \in \mbb{R}^d$.
\end{proof}

\begin{proof}[Proof of Theorem~\ref{lk-1}] 
	\textbf{Step 1:} Throughout this first part of the proof, we fix $x \in \mbb{R}^d$. On a linear subspace $\mc{D}$ of $\mc{D}(A)$ define a sublinear operator $B: \mc{D} \to \mbb{R}$ by $Bf := Af(x)$. From the positive maximum principle for $A$, it is immediate that $B$ satisfies the positive maximum principle in $x$. Applying Lemma~\ref{lk-5}, we find that there exist an index set $\Theta=\Theta(x)$ and a family $(A_{\theta})_{\theta \in \Theta}$ of linear functionals on $\mc{D}$ satisfying the positive maximum principle in $x \in \mbb{R}^d$ such that \begin{equation}
		Af(x) = \sup_{\theta \in \Theta} A_{\theta} f, \qquad f \in \mc{D}. \label{lk-eq15}
	\end{equation}
	By assumption, we can choose $\mc{D} := C_c^{\infty}(\mbb{R}^d)$. Since the operators $A_{\theta}$ are linear, the classical Courr\`ege-von Waldenfels theorem shows  that there exist $c_{\theta} \geq 0$, $b_{\theta} \in \mbb{R}^d$, a positive semidefinite matrix $Q_{\theta} \in \mbb{R}^{d \times d}$ and a measure $\nu_{\theta}$ with $\int_{y \neq 0} \min\{1,|y|^2\} \, \nu_{\theta}(dy)<\infty$ such that \begin{equation*}
		A_{\theta} f = - c_{\theta} f(x) + b_{\theta}\cdot \nabla f(x) + \frac{1}{2} \tr(Q_{\theta} \cdot \nabla^2 f(x)) + \int_{y \neq 0} (f(x+y)-f(x)-y \cdot \nabla f(x) \I_{(0,1)}(|y|)) \, \nu_{\theta}(dy)
	\end{equation*}
	for all $f \in C_c^{\infty}(\mbb{R}^d)$, see also \cite[Theorem 3.4]{apple19}. Next we prove that the family $(c_{\theta},b_{\theta},Q_{\theta},\nu_{\theta})$, $\theta \in \Theta$, is bounded, i.e. \begin{equation}
		\sup_{\theta \in \Theta} \left( c_{\theta} + |b_{\theta}| + |Q_{\theta}| + \int_{y \neq 0} \min\{1,|y|^2\} \, \nu_{\theta}(dy) \right)<\infty. \tag{$\star$} \label{lk-eq-st1}
	\end{equation}
	Pick $\chi \in C_c^{\infty}(\mbb{R}^d)$ such that $\I_{B(x,1/2)} \leq \chi \leq \I_{B(x,1)}$. From $\chi(x)=1$, $\nabla \chi(x)=0$ and $\nabla^2 \chi(x)=0$, we find that \begin{align*}
		A_{\theta}(-\chi)=  c_{\theta} +  \int_{y \neq 0} (1-\chi(x+y)) \, \nu_{\theta}(dy)
		\geq c_{\theta} + \int_{|y| \geq 1} \nu_{\theta}(dy).
	\end{align*}
	Hence, \begin{align*}
		\sup_{\theta \in \Theta} \left( c_{\theta} + \int_{|y| \geq 1} \nu_{\theta}(dy) \right) \leq \sup_{\theta \in \Theta} A_{\theta}(-\chi) = A(-\chi)(x)<\infty.
	\end{align*}
	For fixed $j \in \{1,\ldots,d\}$ consider the mapping $f(y) := (y^{(j)}-x^{(j)})^2 \chi(y)$, then \begin{equation*}
		A_{\theta} f = Q_{\theta}^{(j,j)} + \int_{y \neq 0} (y^{(j)})^2 \chi(y+x) \, \nu_{\theta}(dy) \geq Q_{\theta}^{(j,j)} + \int_{0<|y|<1} (y^{(j)})^2 \, \nu_{\theta}(dy)
	\end{equation*}
	for all $\theta \in \Theta$. Since $A_{\theta} f$ is bounded from above by $Af(x)$, this implies that the right-hand side is bounded uniformly in $\theta \in \Theta$. In a similar fashion, we consider $y \mapsto (y^{(j)}-x^{(j)}) \chi(y)$ and $y \mapsto (y^{(j)}-x^{(j)}) (y^{(i)}-x^{(i)}) \chi(y)$ to obtain that \begin{equation*}
		\sup_{\theta \in \Theta} \left( \sum_{j=1}^d |b_{\theta}^{(j)}| + \sum_{i \neq j} |Q_{\theta}^{(i,j)}| \right) < \infty.
	\end{equation*}
	Combining the above estimates, we get \eqref{lk-eq-st1}. \par
	\textbf{Conclusion:} The previous step shows that for any $x \in \mbb{R}^d$ there exist an index set $\Theta=\Theta(x)$ and a uniformly bounded family  $(c_{\theta}(x),b_{\theta}(x),Q_{\theta}(x),\nu_{\theta}(x,dy))$ of characteristics such that \begin{align*}
		Af(x) = \sup_{\theta \in \Theta(x)} &\bigg( -c_{\theta}(x) f(x) + b_{\theta}(x) \cdot \nabla f(x) + \frac{1}{2} \tr(Q_{\theta}(x) \cdot \nabla^2 f(x)) \\
		&+ \int_{y \neq 0} (f(x+y)-f(x)-y \cdot \nabla f(x) \I_{(0,1)}(|y|)) \, \nu_{\theta}(x,dy) \bigg)
	\end{align*}
	for all $f \in C_c^{\infty}(\mbb{R}^d)$. If we define an index set $I$ by $I := \bigcup_{x \in \mbb{R}^d} \Theta(x)$ and set \begin{equation*}
		(c_{\theta}(x),b_{\theta}(x),Q_{\theta}(x),\nu_{\theta}(x,dy))
		:= (c_{\theta'}(x),b_{\theta'}(x),Q_{\theta'}(x),\nu_{\theta'}(x,dy)), \qquad \theta \in I \backslash \Theta(x)
	\end{equation*}
	for some fixed $\theta' \in \Theta(x)$, then we obtain the representation 
	\begin{align*}
		Af(x) = \sup_{\theta \in I} &\bigg(-c_{\theta}(x) f(x) +  b_{\theta}(x) \cdot \nabla f(x) + \frac{1}{2} \tr(Q_{\theta}(x) \cdot \nabla^2 f(x)) \\
		&+ \int_{y \neq 0} (f(x+y)-f(x)-y \cdot \nabla f(x) \I_{(0,1)}(|y|)) \, \nu_{\theta}(x,dy) \bigg). \qedhere
	\end{align*}
\end{proof}

\begin{kor} \label{lk-3}
	Let $(T_t)_{t \geq 0}$ be a sublinear Markov semigroup on $\mc{H}$ with pointwise infinitesimal generator $(A^{(p)},\mc{D}(A^{(p)}))$. If $C_c^{\infty}(\mbb{R}^d) \subseteq \mc{D}(A^{(p)})$, then there exists a family $q_{\theta}(x,\cdot)$, $\theta \in I$, $x \in \mbb{R}^d$, of continuous negative definitions functions such that \begin{equation}
		A^{(p)} f(x) = \sup_{\theta \in I} (-q_{\theta}(x,D)f(x)) \fa f \in C_c^{\infty}(\mbb{R}^d),\, x \in \mbb{R}^d, \label{lk-eq5}
	\end{equation}
	cf.\ \eqref{pseudo2}. Equivalently, \begin{align}  \label{lk-eq55}\begin{aligned}
		A^{(p)} f(x) = \sup_{\theta \in I} &\bigg( -c_{\theta}(x) f(x) + b_{\theta}(x) \cdot \nabla f(x) + \frac{1}{2} \tr(Q_{\theta}(x) \cdot \nabla^2 f(x)) \\ &\qquad + \int_{y \neq 0} (f(x+y)-f(x)-y \cdot \nabla f(x) \I_{(0,1)}(|y|)) \, \nu_{\theta}(x,dy) \bigg) \end{aligned}
				\end{align}
	where $(c_{\theta}(x),b_{\theta}(x),Q_{\theta}(x),\nu_{\theta}(x,\cdot))$ is the characteristics associated via the L\'evy--Khintchine representation \eqref{cndf} with $q_{\theta}(x,\cdot)$. The family $(c_{\theta}(x),b_{\theta}(x),Q_{\theta}(x),\nu_{\theta}(x,dy))$, $\theta \in I$, is uniformly bounded for each $x \in \mbb{R}^d$: \begin{equation*}
		\sup_{\theta \in I} \left( c_{\theta}(x)+ |b_{\theta}(x)|+ |Q_{\theta}(x)|+ \int_{y \neq 0} \min\{1,|y|^2\} \, \nu_{\theta}(x,dy) \right)  < \infty.
	\end{equation*}
\end{kor}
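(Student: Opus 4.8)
The plan is to deduce the corollary directly from Theorem~\ref{lk-1}, applied to the operator $A := A^{(p)}$. First I would record that both hypotheses of Theorem~\ref{lk-1} are in force. As observed in Section~\ref{def}, the pointwise infinitesimal generator $(A^{(p)},\mc{D}(A^{(p)}))$ is a sublinear operator, and by Lemma~\ref{def-1} it satisfies the positive maximum principle \eqref{pmp}. Moreover $A^{(p)}$ takes values in the convex cone $\mc{H} \subseteq \mc{B}_b(\mbb{R}^d)$, and by assumption $C_c^{\infty}(\mbb{R}^d) \subseteq \mc{D}(A^{(p)})$; hence the restriction $A^{(p)}|_{C_c^{\infty}(\mbb{R}^d)}$ fits precisely into the framework of Theorem~\ref{lk-1}.

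Next I would invoke Theorem~\ref{lk-1}. It furnishes an index set $I$ and a family of characteristics $(c_{\theta}(x),b_{\theta}(x),Q_{\theta}(x),\nu_{\theta}(x,dy))$, $\theta \in I$, $x \in \mbb{R}^d$, so that the integro-differential representation \eqref{lk-eq55} holds for all $f \in C_c^{\infty}(\mbb{R}^d)$ and $x \in \mbb{R}^d$, with the supremum attained and with the uniform boundedness \eqref{lk-eq7} valid for each fixed $x$. This already yields the asserted bound on $\sup_{\theta \in I} (c_{\theta}(x)+|b_{\theta}(x)|+|Q_{\theta}(x)|+\int_{y\neq 0}\min\{1,|y|^2\}\,\nu_{\theta}(x,dy))$.

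It then remains to pass to the pseudo-differential formulation \eqref{lk-eq5}, which I would do by matching each summand operator $A_{\theta}$ with a negative definite symbol. For fixed $\theta$ and $x$, let $q_{\theta}(x,\cdot)$ be the function defined by the L\'evy--Khintchine formula \eqref{cndf} with characteristics $(c_{\theta}(x),b_{\theta}(x),Q_{\theta}(x),\nu_{\theta}(x,dy))$; the properties $c_{\theta}(x) \geq 0$, $Q_{\theta}(x)$ positive semidefinite and $\int_{y \neq 0}\min\{1,|y|^2\}\,\nu_{\theta}(x,dy)<\infty$ supplied by Theorem~\ref{lk-1} are exactly what guarantees that $q_{\theta}(x,\cdot)$ is a continuous negative definite function. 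By the equivalence of \eqref{pseudo1} and \eqref{pseudo2}, the associated pseudo-differential operator satisfies $A_{\theta}f(x) = -q_{\theta}(x,D)f(x)$ for all $f \in C_c^{\infty}(\mbb{R}^d)$, so that \eqref{lk-eq55} and \eqref{lk-eq5} are one and the same statement.

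I do not expect a genuine obstacle, since the corollary is essentially a specialization of Theorem~\ref{lk-1}. The only points requiring care are the verification that $A^{(p)}|_{C_c^{\infty}(\mbb{R}^d)}$ indeed maps into $\mc{B}_b(\mbb{R}^d)$, which follows from $\mc{H} \subseteq \mc{B}_b(\mbb{R}^d)$, and the elementary bookkeeping of translating each linear summand $A_{\theta}$ into the corresponding negative definite symbol via \eqref{cndf}.
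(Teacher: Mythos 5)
Your proposal is correct and follows exactly the paper's own route: apply Theorem~\ref{lk-1} to $A^{(p)}$, using Lemma~\ref{def-1} to verify the positive maximum principle, and then identify each linear summand $A_{\theta}$ with the pseudo-differential operator $-q_{\theta}(x,D)$ via the L\'evy--Khintchine formula \eqref{cndf} and the equivalence of \eqref{pseudo1} and \eqref{pseudo2}. The paper's proof is just a more compressed version of the same argument, so nothing further is needed.
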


\begin{proof}
	By Theorem~\ref{lk-1} and the positive maximum principle for sublinear generators, cf.\ Lemma~\ref{def-1}, the generator $A^{(p)}$ has a representation of the form \eqref{lk-eq6}. Equivalently, \begin{equation*}
		A^{(p)} f(x) = \sup_{\theta \in I} (-q_{\theta}(x,D) f)(x), \qquad f \in C_c^{\infty}(\mbb{R}^d),
	\end{equation*}
	where $q_{\theta}(x,\cdot)$ is the continuous negative definite function which is associated via the L\'evy--Khintchine representation with $(c_{\theta}(x),b_{\theta}(x),Q_{\theta}(x),\nu_{\theta}(x,dy))$, cf.\ \eqref{cndf} and \eqref{pseudo1}. 
\end{proof}

\begin{bem} \label{lk-7} \begin{enumerate}[wide, labelwidth=!, labelindent=0pt]
	\item The representation \eqref{lk-eq5} is, in general, not unique. For instance, the operator \begin{equation*}
		Af(x) = |f'(x)| = \sup\{f'(x),-f'(x)\}
	\end{equation*}
	(see Example~\ref{dynkin-2}) has at least two representations of the form \eqref{lk-eq5}: \begin{equation*}
		Af(x) = \sup_{\theta \in [-1,1]} (\theta f'(x)) = \sup_{\theta \in \{-1,1\}} (\theta f'(x)).
	\end{equation*}
	\item If $(T_t)_{t \geq 0}$ is a conservative sublinear Markov semigroup, then $q_{\theta}(x,0)=0$ and $c_{\theta}(x)=0$ for all $\theta \in I$ and $x \in \mbb{R}^d$, see Corollary~\ref{lk-125} below.
		\item In the statement of Corollary~\ref{lk-3} we may replace the pointwise generator $A^{(p)}$ by the strong generator $A$. This follows from the fact that $(A^{(p)},\mc{D}(A^{(p)}))$ extends $(A,\mc{D}(A))$. 
	\item Sufficient conditions which ensure that $C_c^{\infty}(\mbb{R}^d) \subseteq \mc{D}(A^{(p)})$ were obtained in \cite{hjb,julian,nendel19}.
\end{enumerate} \end{bem}

For translation-invariant Markov semigroups, i.e.\ semigroups satisfying $(T_t f)(x) = (T_t f(x+\cdot))(0)$, the representation \eqref{lk-eq5} simplifies since the family of continuous negative definite functions does not depend on $x \in \mbb{R}^d$.

\begin{kor} \label{lk-9}
	Let $(T_t)_{t \geq 0}$ be a translation invariant sublinear Markov semigroup on $\mc{H}$ with pointwise generator $(A^{(p)},\mc{D}(A^{(p)}))$. If $C_c^{\infty}(\mbb{R}^d) \subseteq \mc{D}(A)$, then there exists a family $(\psi_{\theta})_{\theta \in I}$ of continuous negative definite functions such that \begin{equation*}
		A^{(p)} f(x) = \sup_{\theta \in I} (-\psi_{\theta}(D) f(x)) \fa f \in C_c^{\infty}(\mbb{R}^d),\, x \in \mbb{R}^d.
	\end{equation*}
	The associated family $(c_{\theta},b_{\theta},Q_{\theta},\nu_{\theta})$, $\theta \in I$, of triplets satisfies \begin{equation*}
		\sup_{\theta \in I} \left( c_{\theta}+|b_{\theta}| + |Q_{\theta}| +\int_{y \neq 0} \min \left\{1, |y|^2 \right\} \, \nu_{\theta}(dy) \right) < \infty.
	\end{equation*}
\end{kor}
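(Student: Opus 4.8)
The plan is to exploit translation invariance to collapse the $x$-dependent representation of Corollary~\ref{lk-3} to a single base point, the origin, and then transport it to an arbitrary $x$ by shifting. Throughout I write $\tau_x f := f(x+\lcdot)$ for the shift operator, so that $(\tau_x f)(y) = f(x+y)$ and $(\tau_x f)(0) = f(x)$; note that $\tau_x f \in C_c^{\infty}(\mbb{R}^d) \subseteq \mc{D}(A^{(p)})$ whenever $f \in C_c^{\infty}(\mbb{R}^d)$, so all generator expressions below are well-defined.

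First I would record the pointwise consequence of translation invariance for the generator. The assumption $(T_t f)(x) = (T_t \tau_x f)(0)$ gives, for every $f \in C_c^{\infty}(\mbb{R}^d)$ and every $x \in \mbb{R}^d$,
\[
	A^{(p)} f(x) = \lim_{t \downarrow 0} \frac{(T_t \tau_x f)(0) - (\tau_x f)(0)}{t} = A^{(p)}(\tau_x f)(0).
\]
Next I would apply Theorem~\ref{lk-1} (equivalently, the construction in Corollary~\ref{lk-3}) only at the single point $x=0$. This produces an index set $I := \Theta(0)$ together with a uniformly bounded family of characteristics $(c_{\theta}, b_{\theta}, Q_{\theta}, \nu_{\theta})$, $\theta \in I$ --- now genuinely independent of position --- and associated continuous negative definite functions $\psi_{\theta}$, such that
\[
	A^{(p)} g(0) = \sup_{\theta \in I} (-\psi_{\theta}(D) g(0)), \qquad g \in C_c^{\infty}(\mbb{R}^d).
\]

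The key computation is then to insert $g = \tau_x f$ and observe that the constant-coefficient operators $\psi_{\theta}(D)$ commute with translations. Since $(\tau_x f)(0) = f(x)$, $\nabla(\tau_x f)(0) = \nabla f(x)$, $\nabla^2(\tau_x f)(0) = \nabla^2 f(x)$ and $(\tau_x f)(y) = f(x+y)$, formula~\eqref{pseudo2} evaluated at the origin yields $\psi_{\theta}(D)(\tau_x f)(0) = \psi_{\theta}(D) f(x)$ for every $\theta \in I$. Combining the three displays gives
\[
	A^{(p)} f(x) = A^{(p)}(\tau_x f)(0) = \sup_{\theta \in I} (-\psi_{\theta}(D)(\tau_x f)(0)) = \sup_{\theta \in I} (-\psi_{\theta}(D) f(x)),
\]
which is the asserted representation. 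The uniform boundedness of $(c_{\theta}, b_{\theta}, Q_{\theta}, \nu_{\theta})$, $\theta \in I$, is inherited verbatim from the bound~\eqref{lk-eq7} at $x=0$ and is independent of $x$ by construction.

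I do not expect a serious obstacle, since the heavy lifting is already done in Theorem~\ref{lk-1}; the only point requiring care is the identity $A^{(p)} f(x) = A^{(p)}(\tau_x f)(0)$, which must be justified directly from the translation-invariance hypothesis rather than from an operator identity $A^{(p)} \tau_x = \tau_x A^{(p)}$, because $T_t$ need not map $\mc{D}(A^{(p)})$ into itself (cf.\ Example~\ref{dynkin-2}). Once this pointwise identity is in place, the reduction to a single base point and the translation-commuting property of constant-coefficient symbols make the remainder routine.
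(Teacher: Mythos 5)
Your proposal is correct and follows essentially the same route as the paper: the paper likewise invokes Corollary~\ref{lk-3}, uses translation invariance to get $A^{(p)}f(x)=(A^{(p)}f(x+\cdot))(0)$, and sets $\psi_{\theta}:=q_{\theta}(0,\cdot)$, which is exactly your reduction to the base point $x=0$ combined with the translation-commuting property of constant-coefficient symbols. Your write-up merely spells out in more detail the steps the paper compresses into two lines, including the (correct) caveat that the shift identity must come from the semigroup hypothesis rather than from invariance of $\mc{D}(A^{(p)})$ under $T_t$.
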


\begin{proof}
	By Corollary~\ref{lk-3}, there exists a family $q_{\theta}(x,\cdot)$ of continuous negative definite functions such that \begin{equation*}
		A^{(p)} f(x) = \sup_{\theta \in I} (-q(x,D) f)(x), \qquad f \in C_c^{\infty}(\mbb{R}^d), \, x \in \mbb{R}^d.
	\end{equation*}
	Since the semigroup is translation invariant, it follows from the definition of the generator that $A^{(p)} f(x) = (A^{(p)} f(x+\cdot))(0)$ for all $f \in \mc{D}(A^{(p)})$, $x \in \mbb{R}^d$. Thus, \begin{equation*}
		A^{(p)} f(x) = \sup_{\theta \in I} (-\psi_{\theta}(D) f)(x), \qquad f \in C_c^{\infty}(\mbb{R}^d), \, x \in \mbb{R}^d,
	\end{equation*}
	for $\psi_{\theta} := q_{\theta}(0,\cdot)$. The uniform boundedness of the associated triplets is evident from Corollary~\ref{lk-3}.
\end{proof}

\begin{bem}
	If $(T_t)_{t \geq 0}$ is a \emph{linear} translation invariant Markov semigroup, say on $C_b(\mbb{R}^d)$, then $(T_t)_{t \geq 0}$ is the semigroup of a L\'evy process, and therefore the assumption $C_c^{\infty}(\mbb{R}^d) \subseteq \mc{D}(A^{(p)})$ in Corollary~\ref{lk-9} is automatically satisfied, see e.g.\ \cite[Lemma 6.3]{barcelona}.  This is no longer true for sublinear semigroups. For instance, $T_0 := \id$, \begin{equation*}
		T_t f(x) := \|f\|_{\infty} = \sup_{y \in \mbb{R}^d} |f(y)|, \qquad t>0,
	\end{equation*}
	defines a translation invariant sublinear Markov semigroup, but $C_c^{\infty}(\mbb{R}^d)$ is not contained in the domain $\mc{D}(A^{(p)})$ of the pointwise generator. In fact, even pointwise convergence $T_t f(x) \xrightarrow[]{t \to 0} f(x)$ fails to hold for $f \in C_c^{\infty}(\mbb{R}^d)$. \par
	 In view of this example, it is natural to ask whether $C_c^{\infty}(\mbb{R}^d) \subseteq \mc{D}(A^{(p)})$ holds under the additional assumption that $(T_t)_{t \geq 0}$ is strongly continuous (on a sufficiently large domain). This seems to be an open question.
\end{bem}

For many applications it would be useful to have the representation $A^{(p)} f = \sup_{\theta} (-q_{\theta}(x,D)f)$ from Corollary~\ref{lk-3} not only for $f \in C_c^{\infty}(\mbb{R}^d)$, but for a larger class of functions. In the linear framework, one typically invokes the closedness of the infinitesimal generator to extend the representation formula e.g.\ to $f \in C_c^2(\mbb{R}^d)$, cf.\ \cite[Corollary 3.8]{schnurr12} or \cite[Theorem 2.37]{ltp}. The situation is more complicated for sublinear infinitesimal generators. In our next result we give an extension to smooth functions with bounded derivatives. It will play a crucial role when we study viscosity solutions to non-linear Cauchy problems, see Corollary~\ref{lk-10}.

\begin{kor} \label{lk-8}
	Let $(T_t)_{t \geq 0}$ be a sublinear Markov semigroup on $\mc{H}$ with pointwise infinitesimal generator $(A^{(p)},\mc{D}(A^{(p)}))$. Assume that $C_c^{\infty}(\mbb{R}^d) \subseteq \mc{D}(A^{(p)})$. By Corollary~\ref{lk-3}, \begin{equation}
		A^{(p)} f(x) = \sup_{\theta \in I} (-q_{\theta}(x,D) f)(x), \qquad f \in C_c^{\infty}(\mbb{R}^d),\, x \in \mbb{R}^d, \label{lk-eq27}
	\end{equation}
	for a family of continuous negative definite functions $q_{\theta}(x,\cdot)$ with characteristics $(0,b_{\theta}(x),Q_{\theta}(x),\nu_{\theta}(x,\cdot))$, $\theta \in I$. 
	\begin{enumerate}
		\item\label{lk-8-i} If the family $\nu_{\theta}(x,\cdot)$, $\theta \in I$, is tight for each $x \in \mbb{R}^d$, then $C_b^{\infty}(\mbb{R}^d) \subseteq \mc{D}(A^{(p)})$.
		\item\label{lk-8-ii} If $\mc{D} \subseteq C_b^{\infty}(\mbb{R}^d)$ is a linear subspace such that $\mc{D} \subseteq \mc{D}(A^{(p)})$, then \eqref{lk-eq27} holds for any $f \in \mc{D}$.
	\end{enumerate}
\end{kor}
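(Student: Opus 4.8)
My plan is to treat both parts with a single localisation device. Fix $x \in \mbb{R}^d$ and (for part~(i)) a function $f \in C_b^{\infty}(\mbb{R}^d)$, and for $R \geq 1$ choose a cut-off $\chi_R \in C_c^{\infty}(\mbb{R}^d)$ with $\I_{B(x,2R)} \leq \chi_R \leq \I_{B(x,3R)}$, so that $g_R := \chi_R f \in C_c^{\infty}(\mbb{R}^d)$ agrees with $f$ on $B(x,2R)$; in particular $g_R(x)=f(x)$ and all derivatives of $f-g_R$ vanish at $x$. Since $g_R \in C_c^{\infty}(\mbb{R}^d) \subseteq \mc{D}(A^{(p)})$, Corollary~\ref{lk-3} already gives $A^{(p)} g_R(x) = \sup_{\theta}(-q_{\theta}(x,D) g_R)(x)$, and I will compare every object built from $f$ with its counterpart built from $g_R$.

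The heart of the matter is an estimate of the tail $T_t(f-g_R)(x)$. Pick $\eta_R \in C_c^{\infty}(\mbb{R}^d)$ with $\I_{B(x,R)} \leq \eta_R \leq \I_{B(x,2R)}$; then $f-g_R \leq \|f\|_{\infty}(1-\eta_R)$ and $g_R-f \leq \|f\|_{\infty}(1-\eta_R)$, so by positive homogeneity, monotonicity and subadditivity of $T_t$ together with $T_t\I \leq \I$ one gets $T_t(f-g_R)(x) \leq \|f\|_{\infty}(1 + T_t(-\eta_R)(x))$. As $-\eta_R \in C_c^{\infty}(\mbb{R}^d) \subseteq \mc{D}(A^{(p)})$ and $\eta_R\equiv 1$ near $x$ (so $\eta_R(x)=1$, $\nabla\eta_R(x)=0$, $\nabla^2\eta_R(x)=0$), dividing by $t$, letting $t\downarrow 0$ and using $c_{\theta}(x)=0$ yields
\begin{equation*}
\limsup_{t \downarrow 0} \frac{T_t(f-g_R)(x)}{t} \leq \|f\|_{\infty}\, A^{(p)}(-\eta_R)(x) = \|f\|_{\infty}\sup_{\theta} \int_{|y|\geq R}(1-\eta_R(x+y))\,\nu_{\theta}(x,dy) \leq \eps_R,
\end{equation*}
where $\eps_R := \|f\|_{\infty}\sup_{\theta}\nu_{\theta}(x,\{|y|\geq R\})$; the same bound holds for $g_R-f$, so $\limsup_{t\downarrow 0} t^{-1}|T_t f(x)-T_t g_R(x)| \leq \eps_R$.

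For part~(i) I write $t^{-1}(T_t f(x)-f(x)) = t^{-1}(T_t f(x)-T_t g_R(x)) + t^{-1}(T_t g_R(x)-g_R(x))$; the second term tends to $A^{(p)}g_R(x)$ and the first is bounded by $\eps_R$, so the $\limsup$ and $\liminf$ of the left-hand side differ by at most $2\eps_R$. Tightness of $(\nu_{\theta}(x,\cdot))_{\theta}$ forces $\eps_R \to 0$, hence the limit exists, $f \in \mc{D}(A^{(p)})$, and $C_b^{\infty}(\mbb{R}^d) \subseteq \mc{D}(A^{(p)})$; since also $|{-q_{\theta}(x,D)(f-g_R)(x)}| \leq \eps_R$ uniformly in $\theta$, one reads off $A^{(p)}f(x) = \lim_R A^{(p)}g_R(x) = \sup_{\theta}(-q_{\theta}(x,D)f)(x)$ as well. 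The main obstacle here is exactly the uniform-in-$\theta$ decay of the far-field mass, and this is precisely what tightness supplies.

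For part~(ii) the membership $f\in\mc{D}(A^{(p)})$ is assumed, so the limit $A^{(p)}f(x)$ exists and only its \emph{value} has to be identified, now without tightness. I would apply Lemma~\ref{lk-5} not to $A^{(p)}$ on $C_c^{\infty}(\mbb{R}^d)$ but to the upper generator $N_x(h):=\limsup_{t\downarrow 0} t^{-1}(T_th(x)-h(x))$, a sublinear functional on the linear space $C_b^{\infty}(\mbb{R}^d)$ which satisfies the positive maximum principle in $x$ (the proof of Lemma~\ref{def-1} applies verbatim) and agrees with $A^{(p)}(\cdot)(x)$ wherever the strong limit exists, in particular on $\mc{D}$ and on $C_c^{\infty}(\mbb{R}^d)$. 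Lemma~\ref{lk-5} produces linear functionals $L_{\sigma}\leq N_x$ with $N_x=\sup_{\sigma}L_{\sigma}$, each obeying the positive maximum principle in $x$; restricted to $C_c^{\infty}(\mbb{R}^d)$ the classical Courr\`ege theorem represents $L_{\sigma}$ as $-q_{\sigma}(x,D)\cdot(x)$, and $L_{\sigma}\leq N_x$ there shows $q_{\sigma}$ lies in the family of Corollary~\ref{lk-3}. It then remains to identify the fixed functional $L_{\sigma}$ on $f$ with the canonical $C_b^2$-extension $-q_{\sigma}(x,D)f(x)$, after which $A^{(p)}f(x)=N_x(f)=\sup_{\sigma}L_{\sigma}f=\sup_{\sigma}(-q_{\sigma}(x,D)f)(x)$ gives \eqref{lk-eq27}. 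The structural reason this should avoid tightness is that for a \emph{single} $\sigma$ the discrepancy $-q_{\sigma}(x,D)(f-g_R)(x)=\int_{|y|\geq 2R}(f-g_R)(x+y)\,\nu_{\sigma}(x,dy)$ tends to $0$ by dominated convergence for the one measure $\nu_{\sigma}$, with no supremum over $\theta$ to control. The delicate point, which I expect to be the main obstacle, is precisely to show that $L_{\sigma}$ respects this canonical extension, i.e.\ that $L_{\sigma}(f-g_R)\to 0$; this does \emph{not} follow from the crude family bound $\eps_R$ (which need not vanish) and must instead be extracted from the positive maximum principle for $L_{\sigma}$ together with the finiteness of the single measure $\nu_{\sigma}$.
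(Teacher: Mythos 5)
Part~(\ref{lk-8-i}) of your proposal is correct and is essentially the paper's own argument: both localise $f$ by a cut-off $g_R=\chi_R f$, bound $\limsup_{t\downarrow 0}t^{-1}\lvert T_tf(x)-T_tg_R(x)\rvert$ by $\|f\|_\infty$ times the generator applied to a bump function equal to $1$ near $x$ (which tightness makes small uniformly in $\theta$), and control $\lvert Lf(x)-Lg_R(x)\rvert$ by the uniform tail mass $\sup_\theta\nu_\theta(x,\{|y|\geq R\})$.

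Part~(\ref{lk-8-ii}) is where the real content lies, and your proof stops exactly at the decisive step. Your framework --- Hahn--Banach (Lemma~\ref{lk-5}) applied to the upper generator $N_x$ on $C_b^\infty(\mbb{R}^d)$, linear pieces $L_\sigma$ satisfying the positive maximum principle at $x$, classical Courr\`ege on $C_c^\infty(\mbb{R}^d)$ --- is sound and parallels the paper's, which instead reuses the functionals $A_\theta$ from \eqref{lk-eq15}; taking $C_b^\infty(\mbb{R}^d)$ as the ambient space is even a little cleaner, since it contains $C_c^\infty(\mbb{R}^d)$ and the constants from the outset. But you explicitly leave open the identification $L_\sigma f=-q_\sigma(x,D)f(x)$ for $f\in\mc{D}$, i.e.\ $L_\sigma(f-g_R)\to 0$, remarking only that it ``must be extracted from the positive maximum principle for $L_\sigma$ together with the finiteness of the single measure $\nu_\sigma$''. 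That is the correct diagnosis of the tool, but it is not the estimate; without it neither $N_x(f)=\sup_\sigma(-q_\sigma(x,D)f)(x)$ nor the domination $(-q_\theta(x,D)f)(x)\leq A^{(p)}f(x)$ for \emph{every} $\theta\in I$ (needed to recover \eqref{lk-eq27} for the full family of Corollary~\ref{lk-3}) is established. The missing argument, which is how the paper closes the gap, is short: if $u\in C_b^\infty(\mbb{R}^d)$ vanishes on $B(x,2r)$, then $g:=\|u\|_\infty(1-\chi_r(\cdot-x))-u$ satisfies $g\geq 0=g(x)$, so the positive maximum principle for the \emph{linear} functional gives $L_\sigma g\geq 0$; doing the same for $-u$ yields
\begin{equation*}
	|L_\sigma u|\leq\|u\|_\infty\,\bigl|L_\sigma\bigl(1-\chi_r(\cdot-x)\bigr)\bigr|.
\end{equation*}
Applying this with $u=f-g_n$, which vanishes on $B(x,2n)$, reduces everything to showing that for the fixed index $\sigma$ one has $L_\sigma(1-\chi_n(\cdot-x))\to 0$ as $n\to\infty$; this is the single-measure dominated-convergence statement $\int(1-\chi_n(y))\,\nu_\sigma(x,dy)\to 0$ (no supremum over $\theta$ to control), i.e.\ precisely the ``one measure'' phenomenon you point to, combined with $q_\sigma(x,D)f_n(x)\to q_\sigma(x,D)f(x)$. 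Until this chain is written out, part~(\ref{lk-8-ii}) is an outline rather than a proof.
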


\begin{bem} \label{lk-85} \begin{enumerate}[wide, labelwidth=!, labelindent=0pt]
	\item\label{lk-85-i} If $(T_t)_{t \geq 0}$ is a linear Markov semigroup, then the index set $I$ consists of a single element, and therefore the tightness of $\nu_{\theta}(x,\cdot)$, $\theta \in I$, is  automatically satisfied for each fixed $x \in \mbb{R}^d$.
	\item\label{lk-85-ii} If the family $\nu_{\theta}(x,dy)$, $\theta \in I$, is tight, then we can choose $\mc{D} := C_b^{\infty}(\mbb{R}^d)$ in Corollary~\ref{lk-8}\eqref{lk-8-ii}. Moreover, tightness implies \begin{equation*}
		\lim_{r \to 0} \sup_{\theta \in I} \sup_{|\xi| \leq r} |q_{\theta}(x,\xi)| =0,
		\end{equation*}
	cf.\  \cite[Lemma A.2]{hjb}. Conversely, the family $\nu_{\theta}(x,\cdot)$, $\theta \in I$, cannot be tight if the above limit does not equal zero. For instance, for $q_{\theta}(x,\xi) := 1-\cos(\theta \xi)$, $\theta \in I:=\mbb{N}$, tightness fails to  hold. See Proposition~\ref{lk-13} for an equivalent characterization of tightness in terms of the generator.
\end{enumerate} \end{bem}

\begin{proof}[Proof of Corollary~\ref{lk-8}]
	Set $Lf(x) := \sup_{\theta \in I}(-q_{\theta}(x,D)f)(x)$. Note that, by \eqref{pseudo2} and \eqref{def-eq11}, $Lf$ is well defined for any $f \in C_b^2(\mbb{R}^d)$.  Throughout this proof, $\chi \in C_c^{\infty}(\mbb{R}^d)$ is such that $\I_{B(0,1)} \leq \chi \leq \I_{B(0,2)}$, and  $\chi_r(x) := \chi(x/r)$ for $r>0$. \begin{enumerate}[wide, labelwidth=!, labelindent=0pt]
		\item For fixed $x \in \mbb{R}^d$ and $f \in C_b^{\infty}(\mbb{R}^d)$ define $f_n(y) := f(y) \chi_n(y-x)$. Since $f_n(x)=f(x)$ and $T_t(f) \leq T_t(f-f_n)+T_t(f_n)$, we have \begin{align*}
			\frac{T_t f(x)-f(x)}{t} - Lf(x) \leq \frac{T_t (f-f_n)(x)}{t} + \frac{T_t f_n(x)-f_n(x)}{t} - Lf(x).
		\end{align*}
		On the other hand, $T_t f_n \leq T_t (f_n-f) + T_tf$ gives \begin{align*}
			Lf(x) - \frac{T_t f(x)-f(x)}{t} \leq Lf(x) - \frac{T_t f_n(x)-f_n(x)}{t} + \frac{T_t (f_n-f)(x)}{t}.
		\end{align*}
		By definition, $f_n \in C_c^{\infty}(\mbb{R}^d)$, and so $A^{(p)} f_n = Lf_n$. Using $|f_n-f| \leq 2 \|f\|_{\infty} (1-\chi_n(\cdot-x))$, we get \begin{align*}
			\limsup_{t \to 0} \left| \frac{T_t f(x)-f(x)}{t} - Lf(x)\right| \leq 2\|f\|_{\infty} \limsup_{t \to 0} \frac{1+T_t(-\chi_n(\cdot-x))(x)}{t}  + |Lf_n(x)-Lf(x)|.
		\end{align*}
		As $\chi_n(\cdot-x) \in C_c^{\infty}(\mbb{R}^d)$, this gives \begin{equation*}
				\limsup_{t \to 0} \left| \frac{T_t f(x)-f(x)}{t} - Lf(x)\right| \leq 2\|f\|_{\infty} L(-\chi_n(\cdot-x))(x) + |Lf_n(x)-Lf(x)|.
		\end{equation*}
		It remains to show that the right-hand side converges to $0$ as $n \to \infty$. Since $\chi_n=0$ on $B(0,n)$, it follows from the definition of $L$ and the tightness of the L\'evy measures that \begin{equation*}
			L(-\chi_n(\cdot-x))(x) = \sup_{\theta \in I} \int_{y \neq 0} (1-\chi_n(y)) \, \nu_{\theta}(x,dy) \leq \sup_{\theta \in I} \int_{|y| \geq n} \nu_{\theta}(x,dy) \xrightarrow[]{n \to \infty} 0.
		\end{equation*}
		For the second term, we use the elementary inequality \begin{equation*}
			\sup_{i} a_i - \sup_i b_i \leq \sup_i (a_i-b_i),
		\end{equation*}
		the estimate \eqref{def-eq11} and the fact that $f_n = f$ on $B(x,n)$ to deduce that \begin{align*}
			Lf(x)-Lf_n(x) 
			&\leq 
			\sup_{\theta \in I} \left(-q_{\theta}(x,D)f(x)+q_{\theta}(x,D) f_n \right)(x) \\
			&\leq \left(\|f_n\|_{C_b^2(\mbb{R}^d)}+\|f\|_{C_b^2(\mbb{R}^d)}\right) \sup_{\theta \in I} \int_{|y| \geq n} \nu_{\theta}(x,dy).
		\end{align*}
		Interchanging the roles of $f$ and $f_n$, we get \begin{equation*}
			|Lf(x)-Lf_n(x)| \leq c \|f\|_{C_b^2(\mbb{R}^d)} \sup_{\theta \in I} \int_{|y| \geq n} \nu_{\theta}(x,dy) \xrightarrow[]{n \to \infty} 0.
		\end{equation*}
		\item Fix $x \in \mbb{R}^d$. From the proof of Theorem~\ref{lk-1}, cf.\ \eqref{lk-eq15}, we know that there exists a family $(A_{\theta})_{\theta \in \Theta}$ of linear functionals on $\mc{D}$ satisfying a positive maximum principle in $x \in \mbb{R}^d$ such that \begin{equation}
			A^{(p)} f(x) = \sup_{\theta \in \Theta} A_{\theta}f, \qquad f \in \mc{D}, \label{lk-eq31}
		\end{equation}
		and \begin{equation}
			A_{\theta}f = - q_{\theta}(x,D) f(x), \qquad f \in C_c^{\infty}(\mbb{R}^d). \label{lk-eq32}
		\end{equation}
		Let $u \in C_b^{\infty}(\mbb{R}^d)$ be such that $u=0$ on $B(x,2r)$ for some $r>0$. Since \begin{equation*}
			g(y) := \|u\|_{\infty} (1-\chi_r(y-x))-u(y) \geq 0 = g(x),
		\end{equation*}
		 it follows from the positive maximum principle that $A_{\theta} g \geq 0$, i.e.\ $A_{\theta} u \leq \|u\|_{\infty} A_{\theta}(1-\chi_r(\cdot-x))$. Replacing $u$ by $-u$, we find that \begin{equation}	
			|A_{\theta} u| \leq \|u\|_{\infty} |A_{\theta} (1-\chi_r(\cdot-x))|. \label{lk-eq33}
		\end{equation}
		Now let $f \in \mc{D} \subseteq C_b^{\infty}(\mbb{R}^d)$ and set $f_n(y) := f(y) \chi_n((y-x)/2n)$. Since $f-f_n=0$ on $B(x,2n)$, we get from \eqref{lk-eq32} and \eqref{lk-eq33} that \begin{align*}
			&|A_{\theta} (f) - q_{\theta}(x,D)f(x)| \\
			&\leq |A_{\theta}(f-f_n)| + |A_{\theta}(f_n) - q_{\theta}(x,D) f_n(x)| + |q_{\theta}(x,D)f_n(x)-q_{\theta}(x,D)f(x)|\\
			&\leq \|f-f_n\|_{\infty} |A_{\theta} (1-\chi_n(\cdot-x))|  + |q_{\theta}(x,D)f_n(x)-q_{\theta}(x,D)f(x)|.
		\end{align*}
		Using the representation of $q_{\theta}(x,D)$ as an integro--differential operator, cf.\ \eqref{pseudo2}, it can be easily verified that $q_{\theta}(x,D) f_n(x) \xrightarrow[]{n \to \infty} q_{\theta}(x,D) f(x)$. Moreover, as in \eqref{lk-8-i}, we find for fixed $\theta \in \Theta$ that $q_{\theta}(x,D)(1-\chi_n(\cdot-x))(x)$ converges to $0$ as $n \to \infty$. We conclude that $A_{\theta}f=q_{\theta}(x,D) f(x)$ for any $f \in C_b^{\infty}(\mbb{R}^d)$, and by \eqref{lk-eq31} this proves the assertion. \qedhere
\end{enumerate}	
\end{proof}

\begin{kor} \label{lk-125}
	Let $(T_t)_{t \geq 0}$ be a sublinear Markov semigroup with pointwise generator $(A^{(p)},\mc{D}(A^{(p)})$ satisfying $C_c^{\infty}(\mbb{R}^d) \subseteq \mc{D}(A^{(p)})$. Denote by $q_{\theta}(x,\cdot)$, $\theta \in I$, $x \in \mbb{R}^d$, the family of continuous negative definite functions associated with $(T_t)_{t \geq 0}$ via Corollary~\ref{lk-3}. If $(T_t)_{t \geq 0}$ is conservative, then $q_{\theta}(x,0)=0$ for all $\theta \in I$ and $x \in \mbb{R}^d$. 
\end{kor}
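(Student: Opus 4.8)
The plan is to read off $q_\theta(x,0)=c_\theta(x)$ directly from the L\'evy--Khintchine representation \eqref{cndf} (every term of the symbol except the constant one vanishes at $\xi=0$), so that the assertion is equivalent to $c_\theta(x)=0$ for all $\theta\in I$. Since each $c_\theta(x)\geq 0$, this is in turn equivalent to $\sup_{\theta\in I}c_\theta(x)=0$. The idea is that the representation couples $\sup_\theta c_\theta(x)$ to the action of the generator on the \emph{constant} function: the $\theta$-th summand of \eqref{lk-eq55}, evaluated at $-\I_{\mbb R^d}$, has vanishing drift, diffusion and jump parts, while the killing term contributes $-c_\theta(x)\cdot(-1)$, so that for each fixed $\theta$
\[ -q_\theta(x,D)(-\I_{\mbb R^d})(x)=c_\theta(x). \]
Hence it suffices to show that the supremum over $\theta$ of the left-hand side equals $A^{(p)}(-\I_{\mbb R^d})(x)$ and that this quantity vanishes.

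First I would establish that conservativeness forces the generator to annihilate the constants. From $T_t\I_{\mbb R^d}=\I_{\mbb R^d}$ and positive homogeneity one gets $T_t(c\,\I_{\mbb R^d})=c\,\I_{\mbb R^d}$ for every $c\geq 0$; the essential point is to propagate this to negative constants, i.e.\ to obtain $T_t(-\I_{\mbb R^d})=-\I_{\mbb R^d}$ and thus $-\I_{\mbb R^d}\in\mc D(A^{(p)})$ with $A^{(p)}(-\I_{\mbb R^d})=0$. Subadditivity together with $T_0=\id$ and $T_t\I_{\mbb R^d}=\I_{\mbb R^d}$ gives $0=T_t(\I_{\mbb R^d}+(-\I_{\mbb R^d}))\leq \I_{\mbb R^d}+T_t(-\I_{\mbb R^d})$, hence the one-sided bound $T_t(-\I_{\mbb R^d})\geq -\I_{\mbb R^d}$. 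This is the step I expect to be the main obstacle: the hypothesis $T_t\I_{\mbb R^d}=\I_{\mbb R^d}$ controls the semigroup only from one side, and one must rule out a strictly faster-than-linear ``re-creation of mass'' $T_t(-\I_{\mbb R^d})>-\I_{\mbb R^d}$ at $t=0$; this is exactly the constant-preserving property built into a sublinear expectation, and it is here that the semigroup structure has to be used rather than conservativeness alone.

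Granting $A^{(p)}(-\I_{\mbb R^d})=0$, I would apply Corollary~\ref{lk-8}\eqref{lk-8-ii} with the one-dimensional linear space $\mc D:=\mbb R\,\I_{\mbb R^d}\subseteq C_b^{\infty}(\mbb R^d)$, which is contained in $\mc D(A^{(p)})$ by the previous step. This extends the representation \eqref{lk-eq27} from $C_c^{\infty}(\mbb R^d)$ to every $f\in\mc D$, in particular to $f=-\I_{\mbb R^d}$, so that
\[ 0=A^{(p)}(-\I_{\mbb R^d})(x)=\sup_{\theta\in I}\bigl(-q_\theta(x,D)(-\I_{\mbb R^d})(x)\bigr)=\sup_{\theta\in I}c_\theta(x). \]
As $c_\theta(x)\geq 0$ for every $\theta$, this forces $c_\theta(x)=0$, whence $q_\theta(x,0)=c_\theta(x)=0$ for all $\theta\in I$ and all $x\in\mbb R^d$. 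The virtue of routing the argument through Corollary~\ref{lk-8}\eqref{lk-8-ii} is that its extension step is carried out for one fixed $\theta$ at a time, approximating $-\I_{\mbb R^d}$ by $-\chi_n$ and using only that the single measure $\nu_\theta(x,\cdot)$ has finite mass away from the origin; thus no tightness of the whole family $\{\nu_\theta(x,\cdot)\}_{\theta\in I}$ is required, which is precisely why a naive direct passage $-\chi_n\to-\I_{\mbb R^d}$ inside the supremum would fail.
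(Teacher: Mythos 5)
Your route is essentially the paper's: reduce the claim to $c_{\theta}(x)=0$, note that $-q_{\theta}(x,D)(-\I_{\mbb{R}^d})(x)=c_{\theta}(x)$, extend the representation of $A^{(p)}$ from $C_c^{\infty}(\mbb{R}^d)$ to a space containing the constants via Corollary~\ref{lk-8}\eqref{lk-8-ii}, and conclude from $\sup_{\theta}c_{\theta}(x)=A^{(p)}(-\I_{\mbb{R}^d})(x)=0$ together with $c_{\theta}\geq 0$. Your closing remark about why no tightness of $\{\nu_{\theta}(x,\cdot)\}_{\theta}$ is needed is also accurate. However, the step you explicitly ``grant'' is a genuine gap, and it is precisely the step that carries the content beyond Corollary~\ref{lk-8}. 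You prove only $T_t(-\I_{\mbb{R}^d})\geq-\I_{\mbb{R}^d}$; the reverse inequality does \emph{not} follow from monotonicity, sublinearity and $T_t\I_{\mbb{R}^d}=\I_{\mbb{R}^d}$ alone. For a single operator, $Tf:=\max(f,\tfrac12 f)$ (pointwise) is monotone, subadditive, positively homogeneous and fixes $\I_{\mbb{R}^d}$, yet $T(-\I_{\mbb{R}^d})=-\tfrac12\I_{\mbb{R}^d}$; so some further input is indispensable. The paper closes exactly this point by invoking \cite[Lemma 3.4]{julian}, which gives $T_t(f+c\I_{\mbb{R}^d})=T_tf+c$ for all $c\in\mbb{R}$ under conservativeness; hence $T_t(c\I_{\mbb{R}^d})=c\I_{\mbb{R}^d}$ for all $c\in\mbb{R}$, $\mbb{R}\I_{\mbb{R}^d}\subseteq\mc{D}(A^{(p)})$ and $A^{(p)}(c\I_{\mbb{R}^d})=0$. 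You must either cite that lemma or prove the identity yourself; as written, your argument establishes only $\inf_{\theta}c_{\theta}(x)=0$ (which is what testing against $+\I_{\mbb{R}^d}$, where everything is unproblematic, already gives) rather than the required $\sup_{\theta}c_{\theta}(x)=0$.

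A second, smaller issue: you apply Corollary~\ref{lk-8}\eqref{lk-8-ii} with $\mc{D}:=\mbb{R}\,\I_{\mbb{R}^d}$. Although this literally satisfies the stated hypotheses, the proof of that corollary identifies the Hahn--Banach functionals $A_{\theta}$ on $\mc{D}$ with the operators $-q_{\theta}(x,D)$ of Corollary~\ref{lk-3} by restricting them to $C_c^{\infty}(\mbb{R}^d)$ and approximating with cut-offs $\chi_n\in C_c^{\infty}(\mbb{R}^d)$; for this to produce the \emph{same} family $(q_{\theta})_{\theta\in I}$, the subspace $\mc{D}$ must contain $C_c^{\infty}(\mbb{R}^d)$. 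The paper therefore works with $\mc{D}:=\{f+c\I_{\mbb{R}^d};\,f\in C_c^{\infty}(\mbb{R}^d),\,c\in\mbb{R}\}$, and you should do the same; with that choice (and the cited constant-shift lemma) your computation and conclusion coincide with the paper's.
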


Note that $q_{\theta}(x,0)=0$ is equivalent to $c_{\theta}(x)=0$ in the representation \eqref{lk-eq55} of $A^{(p)}$ as an integro-differential operator. If $(T_t)_{t\geq0}$ is a \emph{linear} Markov semigroup, then the index set $I$ consists of a single element, i.e.\ the family $q(x,\cdot)$, $x \in \mbb{R}^d$, does not depend on an additional parameter $\theta$. Hence, Corollary~\ref{lk-125} shows that conservativeness of the semigroup implies $q(x,0)=0$ for all $x \in \mbb{R}^d$. This extends \cite[Lemma 5.1]{rs-cons}, see also \cite[Lemma 2.32]{ltp}, where the statement was shown for (linear) Feller semigroups under the additional assumption that $x \mapsto q(x,0)$ is continuous.

\begin{proof}[Proof of Corollary~\ref{lk-125}]
	Define a linear space by $\mc{D} := \{f+c \I_{\mbb{R}^d}; f \in C_c^{\infty}(\mbb{R}^d), c \in \mbb{R}\}$. From the conservativeness of $(T_t)_{t \geq 0}$, it follows that $T_t(f+c )=T_t(f)+c$ for all $f \in \mc{H}$ and $c \in \mbb{R}$, cf.\ \cite[Lemma 3.4]{julian}, and therefore $\mc{D} \subseteq \mc{D}(A^{(p)})$ and \begin{equation*}
		A^{(p)}(f+c \I_{\mbb{R}^d}) = A^{(p)}(f), \qquad f \in C_c^{\infty}(\mbb{R}^d), \,c \in \mbb{R}^d.
	\end{equation*}
	Applying Corollary~\ref{lk-8}\eqref{lk-8-ii}, we find for $f:=0$ that \begin{equation*}
		0 = A^{(p)}(c \I_{\mbb{R}^d})(x) = \sup_{\theta \in I} q_{\theta}(x,0), \qquad x \in \mbb{R}^d.
	\end{equation*}
	As $q_{\theta}(x,0) \geq 0$, this implies $q_{\theta}(x,0)=0$ for all $x \in \mbb{R}^d$ and $\theta \in I$.
\end{proof}

We close this section with an equivalent characterization of tightness of the family $\nu_{\theta}(x,\cdot)$, $\theta \in I$.

\begin{prop} \label{lk-13}
	Let $q_{\theta}(x,\cdot)$, $\theta \in I$, $x \in \mbb{R}^d$, be a family of continuous negative definite functions with $q_{\theta}(x,0)=0$. Denote by $(b_{\theta}(x),Q_{\theta}(x),\nu_{\theta}(x,\cdot))$ the associated family of triplets and set \begin{equation*}
		Lf(x) := \sup_{\theta \in I} (-q_{\theta}(x,D) f)(x), \qquad f \in C_b^{\infty}(\mbb{R}^d).
	\end{equation*}
	The following statements are equivalent for each $x \in \mbb{R}^d$. \begin{enumerate}
		\item\label{lk-13-i} The family $\nu_{\theta}(x,\cdot)$, $\theta \in I$, is tight.
		\item\label{lk-13-ii} For any $\eps>0$ there exists some $\varphi \in C_c^{\infty}(\mbb{R}^d)$, $0 \leq \varphi \leq 1$, such that $L(1-\varphi)(x) \leq \eps$ and $\varphi=1$ in a neighbourhood of $x$.
	\end{enumerate}
\end{prop}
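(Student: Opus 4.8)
Fix $x\in\mbb{R}^d$ throughout. The plan is to reduce both statements to a single explicit formula for $L(1-\varphi)(x)$ and then read off the equivalence by choosing the cut-off function $\varphi$ appropriately. The key observation is that if $\varphi$ equals $1$ near $x$, then $1-\varphi$ and its first two derivatives vanish at $x$, so the local (drift and diffusion) parts of $-q_\theta(x,D)(1-\varphi)(x)$ disappear and only an integral against $\nu_\theta(x,\cdot)$ survives.

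First I would record this computation. Let $\varphi\in C_c^{\infty}(\mbb{R}^d)$ with $0\le\varphi\le1$ and $\varphi\equiv1$ on a neighbourhood of $x$, and put $g:=1-\varphi\in C_b^{\infty}(\mbb{R}^d)$. Then $g(x)=0$, $\nabla g(x)=0$ and $\nabla^2 g(x)=0$; moreover the hypothesis $q_\theta(x,0)=0$ forces $c_\theta(x)=0$. Substituting into the integro-differential representation \eqref{pseudo2}, the term $-c_\theta(x)g(x)$, the drift term $b_\theta(x)\cdot\nabla g(x)$ and the second-order term $\frac12\tr(Q_\theta(x)\cdot\nabla^2 g(x))$ all vanish, and the compensator $y\cdot\nabla g(x)\I_{(0,1)}(|y|)$ drops out, leaving
\[
	-q_\theta(x,D)g(x) = \int_{y\neq0}(1-\varphi(x+y))\,\nu_\theta(x,dy).
\]
Taking the supremum over $\theta\in I$ gives the identity
\[
	L(1-\varphi)(x) = \sup_{\theta\in I}\int_{y\neq0}(1-\varphi(x+y))\,\nu_\theta(x,dy).
\]
Each integral is finite, since $1-\varphi(x+y)$ is bounded and vanishes for small $y$, so that one integrates only over a region $\{|y|\ge\delta\}$ on which the measures $\nu_\theta(x,\cdot)$ are finite.

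With this identity at hand both implications are immediate. For \eqref{lk-13-i}$\Rightarrow$\eqref{lk-13-ii}: given $\eps>0$, tightness provides $R>0$ with $\sup_{\theta\in I}\nu_\theta(x,\{|y|\ge R\})\le\eps$, and I would choose $\varphi\in C_c^{\infty}(\mbb{R}^d)$, $0\le\varphi\le1$, with $\varphi\equiv1$ on $B(x,R)$. Then $1-\varphi(x+y)=0$ for $|y|<R$ and $0\le1-\varphi\le1$, so the identity yields $L(1-\varphi)(x)\le\sup_{\theta\in I}\nu_\theta(x,\{|y|\ge R\})\le\eps$, which is \eqref{lk-13-ii}. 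For \eqref{lk-13-ii}$\Rightarrow$\eqref{lk-13-i}: given $\eps>0$, take $\varphi$ as in \eqref{lk-13-ii} and let $R>0$ satisfy $\supp\varphi\subseteq\overline{B(x,R)}$. Then $1-\varphi(x+y)=1$ for $|y|>R$ while $1-\varphi\ge0$ everywhere, so the identity gives $\sup_{\theta\in I}\nu_\theta(x,\{|y|>R\})\le L(1-\varphi)(x)\le\eps$; as $\eps>0$ was arbitrary, the family $\nu_\theta(x,\cdot)$, $\theta\in I$, is tight.

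The argument is essentially bookkeeping, so I do not expect a genuine obstacle: the only step requiring care is the reduction in the second paragraph, which hinges on $q_\theta(x,0)=0$ (hence $c_\theta(x)=0$) together with the vanishing of $g$, $\nabla g$ and $\nabla^2 g$ at $x$. The one conceptual point is to use tightness in the sense of uniform smallness of the tails $\nu_\theta(x,\{|y|\ge R\})$ at infinity; the behaviour of the measures near the origin plays no role, precisely because every admissible cut-off $\varphi$ equals $1$ near $x$ and therefore kills the contribution of small jumps.
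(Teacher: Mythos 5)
Your proof is correct and takes essentially the same route as the paper's: both arguments reduce everything to the identity $L(1-\varphi)(x)=\sup_{\theta\in I}\int_{y\neq 0}(1-\varphi(x+y))\,\nu_{\theta}(x,dy)$ (valid because $q_{\theta}(x,0)=0$ and $1-\varphi$ together with its first two derivatives vanishes at $x$), and then choose the cut-off equal to one on a large ball for \eqref{lk-13-i}$\Rightarrow$\eqref{lk-13-ii} and use the compact support of $\varphi$ for the converse. You are, if anything, slightly more explicit than the paper about why the local terms drop out and why the remaining integrals are finite.
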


\begin{proof}
	Fix $x \in \mbb{R}^d$ and $\eps>0$. If $\nu_{\theta}(x,\cdot)$, $\theta \in I$, is tight, then there exists $R>|x|$ such that \begin{equation*}
		\sup_{\theta \in I} \int_{|y|>R} \nu_{\theta}(x,dy) \leq \eps.
	\end{equation*}
	Choosing a function $\varphi \in C_c^{\infty}(\mbb{R}^d)$ such that $\varphi|_{B(0,R)}=1$ and $\varphi|_{B(0,2R)}=0$, we find that \begin{align*}
		L(1-\varphi)(x) = \sup_{\theta \in I} \int_{y \neq 0} (1-\varphi(y)) \, \nu_{\theta}(x,dy) \leq \sup_{\theta \in I} \int_{|y|>R} \nu_{\theta}(x,dy) \leq \eps.
	\end{align*}
	On the other hand, if $\varphi$ is a function as in \eqref{lk-13-ii}, then \begin{align*}
		\eps \geq
		 L(1-\varphi)(x)
		= \sup_{\theta \in I} \int_{y \neq 0} (1-\varphi(y)) \, \nu_{\theta}(x,dy)
		\geq \sup_{\theta \in I} \int_{|y|>R} \nu_{\theta}(x,dy)
	\end{align*}
	for $R \gg 1$ sufficiently large, and so the family of measures is tight.
\end{proof}

\section{Solutions to non-linear Cauchy problems associated with integro-differential operators} \label{visc}

In this section, we apply the Courr\`ege-von Waldenfels theorem to study solutions to the non-linear Cauchy problem \begin{equation}
	\frac{\partial}{\partial t} u(t,x) = \sup_{\theta \in I} (-q_{\theta}(x,D) u(t,\cdot))(x), \qquad u(0,x)=f(x) \label{visc-eq1}
\end{equation}
associated with a family of pseudo-differential operators $q_{\theta}(x,D)$, $\theta \in I$, cf.\ \eqref{pseudo1}. Because of the non-linearity, which is caused by the supremum, there exist, in general, no pointwise solutions to \eqref{visc-eq1}. We work with the weaker notion of viscosity solutions which was originally introduced by Crandall \& Lions \cite{lions} and Evans \cite{evans}. The following definition is taken from Hollender \cite{julian}. We refer the reader to \cite[Chapter 2]{julian} and \cite{barles07} for a discussion of equivalent definitions.

\begin{defn} \label{lk-10}
	Let $L: \mc{D}(L) \to \mbb{R}$ be an operator with domain $\mc{D}(L)$ containing the space of smooth functions with bounded derivatives $C_b^{\infty}(\mbb{R}^d)$. An upper semicontinuous function $u: [0,\infty) \times \mbb{R}^d \to \mbb{R}$ is a \emph{viscosity subsolution} to the equation \begin{equation*}
		\partial_t u(t,x)- L_x u(t,x)=0 
	\end{equation*}
	if the inequality $\partial_t \varphi(t,x)-L_x \varphi(t,x) \leq 0$ holds for any function $\varphi \in C_b^{\infty}([0,\infty) \times \mbb{R}^d)$ such that $u-\varphi$ has a global maximum in $(t,x) \in (0,\infty) \times \mbb{R}^d$ with $u(t,x) = \varphi(t,x)$. A mapping $u$ is a \emph{viscosity supersolution} if $-u$ is a viscosity subsolution. If $u$ is both a viscosity sub- and supersolution, then $u$ is called \emph{viscosity solution}.
\end{defn}

As usual, we write $L_x$ to indicate that $L$ acts with respect to the space variable $x$. In order to construct a viscosity solution to \eqref{visc-eq1}, we use the following fundamental theorem by Hollender \cite{julian} which associates with a sublinear semigroup $(T_t)_{t \geq 0}$ an evolution equation.

\begin{prop}[{\cite[Proposition 4.10]{julian}}]  \label{visc-3} 
	Let $(T_t)_{t \geq 0}$ be a sublinear Markov semigroup on $\mc{H}$. Assume that the domain of the pointwise infinitesimal generator $A^{(p)}$ contains the smooth functions with bounded derivatives. If $f \in \mc{H}$ is such that $(t,x) \mapsto u(t,x) = T_t f(x)$ is continuous, then $u$ is a viscosity solution to \begin{equation*}
		\frac{\partial}{\partial t} u(t,x) = A^{(p)}_x u(t,x), \qquad u(0,x)=f(x).
	\end{equation*}
\end{prop}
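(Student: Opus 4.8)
The plan is to verify the two viscosity inequalities separately, using only the algebraic properties of the semigroup (the semigroup law, monotonicity, subadditivity, positive homogeneity and $T_t \I_{\mbb{R}^d} \leq \I_{\mbb{R}^d}$) together with the standing hypothesis $C_b^{\infty}(\mbb{R}^d) \subseteq \mc{D}(A^{(p)})$; the explicit Courr\`ege representation of $A^{(p)}$ is in fact not needed. The initial condition $u(0,\cdot)=f$ is immediate from $T_0=\id$, and since $u$ is assumed continuous it is both upper and lower semicontinuous, so the definition applies both to $u$ and to $-u$. The engine of the whole argument is the \emph{backward} semigroup identity $u(t_0,\cdot) = T_h\, u(t_0-h,\cdot)$, valid for $0<h\leq t_0$ whenever $t_0>0$: it lets me transport the global touching inequality through the monotone operator $T_h$ and then recognise the generator as $h \downarrow 0$.

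\emph{Subsolution.} Let $\varphi \in C_b^{\infty}([0,\infty)\times\mbb{R}^d)$ be such that $u-\varphi$ has a global maximum at $(t_0,x_0)$ with $t_0>0$ and $u(t_0,x_0)=\varphi(t_0,x_0)$; in particular $u(s,\cdot)\leq\varphi(s,\cdot)$ for every $s$. Applying monotonicity of $T_h$ to $u(t_0-h,\cdot)\leq\varphi(t_0-h,\cdot)$ and using the backward identity gives $\varphi(t_0,x_0)=u(t_0,x_0)=T_h u(t_0-h,\cdot)(x_0)\leq T_h\varphi(t_0-h,\cdot)(x_0)$, whence
\[ \frac{\varphi(t_0,x_0)-\varphi(t_0-h,x_0)}{h} \leq \frac{T_h\varphi(t_0-h,\cdot)(x_0)-\varphi(t_0-h,x_0)}{h}. \]
The left-hand side converges to $\partial_t\varphi(t_0,x_0)$. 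For the right-hand side I would Taylor expand in time, $\varphi(t_0-h,\cdot)=\varphi(t_0,\cdot)-h\,\partial_t\varphi(t_0,\cdot)+R_h$ with $\|R_h\|_{\infty}\leq \tfrac12 h^2\|\partial_t^2\varphi\|_{\infty}$, and split $T_h$ via subadditivity and positive homogeneity. Since $\varphi(t_0,\cdot)$ and $\partial_t\varphi(t_0,\cdot)$ both lie in $C_b^{\infty}(\mbb{R}^d)\subseteq\mc{D}(A^{(p)})$, one has $h^{-1}\big(T_h\varphi(t_0,\cdot)(x_0)-\varphi(t_0,x_0)\big)\to A^{(p)}\varphi(t_0,\cdot)(x_0)$ and $T_h(-\partial_t\varphi(t_0,\cdot))(x_0)\to-\partial_t\varphi(t_0,x_0)$ (because $T_h\psi(x_0)\to\psi(x_0)$ for every $\psi\in\mc{D}(A^{(p)})$), while $h^{-1}T_hR_h(x_0)\to0$ since $|T_hR_h|\leq\|R_h\|_{\infty}$. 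These limits combine to $\limsup_{h\downarrow0}(\text{right-hand side})\leq A^{(p)}\varphi(t_0,\cdot)(x_0)$, so that $\partial_t\varphi(t_0,x_0)-A^{(p)}_{x_0}\varphi(t_0,x_0)\leq0$.

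\emph{Supersolution.} If instead $u-\varphi$ has a global minimum at $(t_0,x_0)$ with equality there, then $u(t_0-h,\cdot)\geq\varphi(t_0-h,\cdot)$, and the same two steps, now with the reversed monotonicity inequality and the subadditive bound written as $T_h\varphi(t_0-h,\cdot)\geq T_h\varphi(t_0,\cdot)-T_h\big(\varphi(t_0,\cdot)-\varphi(t_0-h,\cdot)\big)$, yield the opposite estimate $\partial_t\varphi(t_0,x_0)\geq A^{(p)}_{x_0}\varphi(t_0,x_0)$. This already implies that $-u$ is a subsolution in the sense of the definition: with the test function $\psi=-\varphi$ for $-u$ one must check $\partial_t\psi(t_0,x_0)-A^{(p)}_{x_0}\psi(t_0,x_0)\leq0$, i.e.\ $\partial_t\varphi(t_0,x_0)\geq -A^{(p)}(-\varphi)(t_0,\cdot)(x_0)$, and this follows from what was just proved together with subadditivity of $A^{(p)}$, which gives $A^{(p)}\varphi+A^{(p)}(-\varphi)\geq A^{(p)}(0)=0$, hence $A^{(p)}\varphi\geq-A^{(p)}(-\varphi)$.

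\emph{Main obstacle.} The only real difficulty is the one-sidedness of the sublinear structure: $T_h$ admits only the subadditive splitting $T_h(g_1+g_2)\leq T_hg_1+T_hg_2$, so for each of the two inequalities I must choose the direction in which this splitting points and keep the remainder terms on the correct side (subsolution uses the upper split, supersolution the lower one). The second technical point is that the function carried through the semigroup is the \emph{time-dependent} slice $\varphi(t_0-h,\cdot)$ rather than a fixed function; this is what forces the first-order Taylor expansion in $t$, and one must verify that the remainder contributes $o(h)$ after division by $h$ and that the first-order term $T_h(-\partial_t\varphi(t_0,\cdot))$ converges correctly. Both of these rest on the domain hypothesis $C_b^{\infty}(\mbb{R}^d)\subseteq\mc{D}(A^{(p)})$, which is precisely where that assumption enters; everything else is routine bookkeeping.
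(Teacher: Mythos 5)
Your argument is correct. Note, however, that the paper itself offers no proof of this proposition: it is imported verbatim from Hollender \cite[Proposition 4.10]{julian}, so there is no in-paper argument to compare against. What you give is a complete, self-contained proof along the standard lines (and essentially the one in the cited source): the backward identity $u(t_0,\cdot)=T_h u(t_0-h,\cdot)$, monotonicity to transport the global touching inequality, a first-order Taylor expansion of $\varphi$ in time, and the sublinear splitting of $T_h$ in the appropriate direction for each of the two inequalities. The details check out: $\varphi(t_0,\cdot)$, $\partial_t\varphi(t_0,\cdot)$ and the remainder $R_h$ all lie in $\mc{H}$ because $C_b^{\infty}(\mbb{R}^d)\subseteq\mc{D}(A^{(p)})\subseteq\mc{H}$ and $\mc{H}$ is a linear space containing constants; the bound $|T_hR_h|\leq\|R_h\|_{\infty}=O(h^2)$ follows from monotonicity, positive homogeneity and $T_h\I_{\mbb{R}^d}\leq\I_{\mbb{R}^d}$; and $T_h\psi(x_0)\to\psi(x_0)$ for $\psi\in\mc{D}(A^{(p)})$ is immediate from the existence of the pointwise limit defining $A^{(p)}\psi(x_0)$. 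Your supersolution step in fact establishes the stronger inequality $\partial_t\varphi(t_0,x_0)\geq A^{(p)}\varphi(t_0,\cdot)(x_0)$ at a touching-from-below point, from which the paper's definition (via $-u$ being a subsolution) follows by the sublinearity relation $A^{(p)}\varphi\geq -A^{(p)}(-\varphi)$; this reduction is stated correctly. You are also right that the Courr\`ege representation plays no role here; it only enters later, in Corollary~\ref{lk-11}, where $A^{(p)}$ is replaced by the supremum of pseudo-differential operators.
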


Combining Proposition~\ref{visc-3} with the Courr\`ege--Waldenfels theorem, we obtain the following corollary.

\begin{kor} \label{lk-11}
	Let $(T_t)_{t \geq 0}$ be a sublinear Markov semigroup on $\mc{H}$ with pointwise infinitesimal generator $(A^{(p)},\mc{D}(A^{(p)}))$ such that $C_b^{\infty}(\mbb{R}^d) \subseteq \mc{D}(A^{(p)})$. If $f \in \mc{H}$ is such that $(t,x) \mapsto u(t,x) := T_t f(x)$ is continuous, then $u$ is a viscosity solution to  \begin{equation}
		\frac{\partial}{\partial t} u(t,x) = \sup_{\theta \in I} (-q_{\theta}(x,D) u(t,\cdot))(x), \qquad u(0,x)=f(x) \label{lk-eq41}
	\end{equation}
	where $(q_{\theta}(x,\cdot))$ is the family of continuous negative definite functions from Corollary~\ref{lk-3}.
\end{kor}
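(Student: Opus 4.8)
The plan is to combine Hollender's evolution-equation result (Proposition~\ref{visc-3}) with the extended representation of the generator (Corollary~\ref{lk-8}). The hypothesis $C_b^{\infty}(\mbb{R}^d) \subseteq \mc{D}(A^{(p)})$ is exactly the assumption needed for Proposition~\ref{visc-3}, so the first step is immediate: since $(t,x) \mapsto u(t,x) = T_t f(x)$ is continuous by assumption, Proposition~\ref{visc-3} shows that $u$ is a viscosity solution to the Cauchy problem $\partial_t u(t,x) = A^{(p)}_x u(t,x)$, $u(0,x) = f(x)$. It then remains only to identify $A^{(p)}_x$, evaluated on the test functions appearing in Definition~\ref{lk-10}, with the Hamilton--Jacobi--Bellman operator $L_x g(x) := \sup_{\theta \in I} (-q_{\theta}(x,D) g)(x)$.

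The one point that requires care is that in Definition~\ref{lk-10} the admissible test functions $\varphi$ range over $C_b^{\infty}([0,\infty) \times \mbb{R}^d)$, so for fixed $t$ the relevant argument $\varphi(t,\cdot)$ lies in $C_b^{\infty}(\mbb{R}^d)$ but need not be compactly supported, whereas the representation of Corollary~\ref{lk-3} is a priori only available on $C_c^{\infty}(\mbb{R}^d)$. I would therefore first upgrade the representation to all of $C_b^{\infty}(\mbb{R}^d)$ by invoking Corollary~\ref{lk-8}\eqref{lk-8-ii} with the linear subspace $\mc{D} := C_b^{\infty}(\mbb{R}^d)$, which is contained in $\mc{D}(A^{(p)})$ by hypothesis. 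This yields
\begin{equation*}
	A^{(p)} g(x) = \sup_{\theta \in I} (-q_{\theta}(x,D) g)(x) = L g(x), \qquad g \in C_b^{\infty}(\mbb{R}^d), \, x \in \mbb{R}^d.
\end{equation*}
Note that $L$ is well defined on $C_b^{\infty}(\mbb{R}^d) \subseteq C_b^2(\mbb{R}^d)$ by \eqref{pseudo2} and \eqref{def-eq11}, so that $L$ qualifies as an operator in the sense of Definition~\ref{lk-10}.

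With this identity the conclusion is purely formal: for any admissible test function $\varphi$ and any $(t,x)$ we have $\varphi(t,\cdot) \in C_b^{\infty}(\mbb{R}^d)$, hence $A^{(p)}_x \varphi(t,\cdot)(x) = L_x \varphi(t,\cdot)(x)$, so the defining inequalities for a viscosity sub- and supersolution of $\partial_t u = A^{(p)}_x u$ coincide term by term with those for $\partial_t u = \sup_{\theta \in I}(-q_{\theta}(x,D) u(t,\cdot))(x)$. Therefore the viscosity solution produced in the first step is a viscosity solution of \eqref{lk-eq41} with initial value $u(0,\cdot) = T_0 f = f$. The only genuinely substantial input is the passage from $C_c^{\infty}(\mbb{R}^d)$ to $C_b^{\infty}(\mbb{R}^d)$, which is precisely the reason Corollary~\ref{lk-8} was established beforehand; I expect this to be the main obstacle, while everything else amounts to matching definitions.
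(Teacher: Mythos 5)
Your argument is correct and follows the same route as the paper: first invoke Proposition~\ref{visc-3} to get a viscosity solution of $\partial_t u = A^{(p)}_x u$, then apply Corollary~\ref{lk-8}\eqref{lk-8-ii} with $\mc{D} := C_b^{\infty}(\mbb{R}^d)$ to identify $A^{(p)}$ with $\sup_{\theta \in I}(-q_{\theta}(x,D)\cdot)$ on the relevant test functions. Your explicit attention to why the extension from $C_c^{\infty}(\mbb{R}^d)$ to $C_b^{\infty}(\mbb{R}^d)$ is the crux matches the paper's intent exactly.
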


\begin{proof}
	By Propsition~\ref{visc-3}, $u(t,x)=T_t f(x)$ is a viscosity solution to \begin{equation*}
		\frac{\partial}{\partial t} u(t,x) = A_x^{(p)} u(t,x), \qquad u(0,x)=f(x).
	\end{equation*}
	Applying Corollary~\ref{lk-8}\eqref{lk-8-ii} and using the very definition of the notion of viscosity solutions, cf.\ Definition~\ref{lk-10}, we get immediately the assertion.
\end{proof}

\begin{bem} \label{lk-12} \begin{enumerate}[wide, labelwidth=!, labelindent=0pt]
	\item\label{lk-12-i} If the associated family of L\'evy measures $\nu_{\theta}(x,\cdot)$, $\theta \in I$, is tight for each $x \in \mbb{R}^d$, then the assumption $C_b^{\infty}(\mbb{R}^d) \subseteq \mc{D}(A^{(p)})$ can be relaxed to $C_c^{\infty}(\mbb{R}^d) \subseteq \mc{D}(A^{(p)})$, see Corollary~\ref{lk-8}\eqref{lk-8-i}.
	\item Corollary~\ref{lk-11} requires that $(t,x)\mapsto T_t f(x)$ is continuous. Typically, continuity with respect to $t$ is much easier to verify than continuity with respect to $x$. For a large class of sublinear Markov semigroups, it is shown in \cite[Theorem 5.3]{hjb} that $t \mapsto T_t f(x)$ is continuous uniformly for $x$ in a compact set. There seems to be no general result which allows us to deduce continuity with respect to the space variable $x$, see also the discussion in \cite[Remark 4.43]{julian}. Translation invariant semigroups $(T_t)_{t \geq 0}$ are one of the few exceptions where continuity with respect to $x$ is easy to obtain; for instance, it is not difficult to see that $T_t f \in C_b^{\uc}(\mbb{R}^d)$ for any $f \in C_b^{\uc}(\mbb{R}^d)$.
	\item For recent existence results for the non-linear Cauchy problem \eqref{lk-eq41} see \cite{nendel19,denk} (via the dynamic programming principle) and \cite{julian,hjb} (via processes under uncertainty) and the references therein.
\end{enumerate}
\end{bem}

\section{L\'evy processes on sublinear expectation spaces} \label{levy}

It is well known that there is a one-to-one correspondence between (classical) L\'evy processes and (linear) translation invariant Markov semigroups, see e.g.\ \cite[Section 2.1]{ltp}. Recently, Denk et.\ al \cite{denk} obtained a similar result in the framework of non-linear semigroups and processes on non-linear expectation spaces. Let us first give the definitions, see also Section~\ref{def}.

\begin{defn} \label{levy-1}
	We call a family of sublinear operators $T_t: \mc{H} \to \mc{H}$, $t \geq 0$, a \emph{sublinear Markov convolution semigroup (on $\mc{H}$)} if \begin{enumerate}
		\item $(T_t)_{t \geq 0}$ is a sublinear conservative Markov semigroup on $\mc{H}$,
		\item $T_t$ is translation invariant for each $t \geq 0$, i.e.\ $f(x+\cdot) \in \mc{H}$ and $(T_t f)(x) = (T_t f(x+\cdot))(0)$ for all $x \in \mbb{R}^d$ and $f \in \mc{H}$,
		\item $(T_t)_{t \geq 0}$ is strongly continuous at $t=0$, i.e.\ $\|T_t f-f\|_{\infty} \to 0$ as $t \to 0$ for all $f \in \mc{H}$.
	\end{enumerate}
\end{defn}

\begin{defn} \label{levy-3}
	Let $(\Omega,\mc{H},\mc{E})$ be a sublinear expectation space, cf.\ Section~\ref{def}. A family $X_t: \Omega \to \mbb{R}^d$, $t \geq 0$, is a \emph{L\'evy process for sublinear expectations} (or \emph{sublinear L\'evy process}) if \begin{enumerate}
		\item $X_t$ is adapted forall $t \geq 0$, i.e.\ $f(X_t) \in \mc{H}$ for all $f \in C_b^{\uc}(\mbb{R}^d)$,
		\item $X_0 \stackrel{d}{=} 0$ in distribution,
		\item $X_{t_1}-X_{t_0},\ldots,X_{t_n}-X_{t_{n-1}}$ are independent for any $0=t_0 < \ldots < t_n$, $n \in \mbb{N}$ (\emph{independent increments}),
		\item $X_t-X_s \stackrel{d}{=} X_{t-s}$ for all $s \leq t$ (\emph{stationary increments}),
		\item $X_t \stackrel{d}{\to} 0$ as $t \downarrow 0$, i.e.\ $\mc{E}f(X_t) \to f(0)$ for any $f \in C_b^{\uc}(\mbb{R}^d)$.
	\end{enumerate}
\end{defn}

If $(X_t)_{t \geq 0}$ is a L\'evy process for sublinear expectations, then $T_t f(x) := \mc{E}f(x+X_t)$ defines a sublinear Markov convolution semigroup on $C_b^{\uc}(\mbb{R}^d)$, see \cite[(Proof of) Theorem 2.3]{denk} and also \cite[Remark 4-38]{julian}. Conversely, sublinear Markov convolution semigroups can be used to construct L\'evy processes for sublinear expectations, cf.\  \cite{denk}. \par 

Classical L\'evy processes can be uniquely characterized (in distribution) by their L\'evy triplet $(b,Q,\nu)$, and it is natural to ask whether there is an analogous result for sublinear L\'evy processes. In this section, we show that the answer is positive for ``nice'' sublinear L\'evy processes, see Theorem~\ref{levy-9} below. Instead of a single L\'evy triplet, we will be dealing with a family  of triplets $(b_{\theta},Q_{\theta},\nu_{\theta})$, $\theta \in I$, which is obtained from the Courr\`ege-von Waldenfels theorem.

\begin{defn} \label{levy-4}
	Let $(X_t)_{t \geq 0}$ be a L\'evy process for sublinear expectations with semigroup $(T_t)_{t \geq 0}$. If the associated pointwise infinitesimal generator $(A^{(p)},\mc{D}(A^{(p)})$ satisfies $C_c^{\infty}(\mbb{R}^d) \subseteq \mc{D}(A^{(p)})$, then we say that (the generator of) $(X_t)_{t \geq 0}$ \emph{has a rich domain}. By the Courr\`ege--Waldenfels theorem, Corollary~\ref{lk-9}, and Corollary~\ref{lk-125}, we can associate a family $(b_{\theta},Q_{\theta},\nu_{\theta})$, $\theta \in I$, with any such process. We call this family  \emph{characteristics} of $(X_t)_{t \geq 0}$.
\end{defn}

Sublinear L\'evy processes can be interpreted as stochastic processes under uncertainty, cf.\ Hollender \cite{julian}, and therefore $(b_{\theta},Q_{\theta},\nu_{\theta})$, $\theta \in I$, are sometimes called uncertainty coefficients.  The following theorem is the main result in this section.

\begin{thm} \label{levy-9}
	Let $(b_{\theta},Q_{\theta},\nu_{\theta})$, $\theta \in I$, be a uniformly bounded family of triplets, i.e.\ \begin{equation*}
		\sup_{\theta \in I} \left( |b_{\theta}| + |Q_{\theta}| + \int_{y \neq 0} \min\{1,|y|^2\} \, \nu_{\theta}(dy) \right) < \infty.
	\end{equation*} \begin{enumerate}
		\item\label{levy-9-i} There exists a L\'evy process for sublinear expectations with characteristics $(b_{\theta},Q_{\theta},\nu_{\theta})_{\theta \in I}$. 
		\item\label{levy-9-ii} If $\nu_{\theta}$, $\theta \in I$, is tight, then there exists a unique sublinear L\'evy process with characteristics $(b_{\theta},Q_{\theta},\nu_{\theta})_{\theta \in I}$, i.e.\ any two sublinear L\'evy processes with the given characteristics have the same finite-dimensional distributions. 
	\end{enumerate}
\end{thm}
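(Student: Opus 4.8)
The plan is to prove existence (\ref{levy-9-i}) by an explicit construction on a sublinear expectation space, and uniqueness under tightness (\ref{levy-9-ii}) by showing that the characteristics determine the one-dimensional distributions and then bootstrapping to all finite-dimensional distributions via the independent-increment structure. For the existence part, I would first build, for each fixed $\theta \in I$, a classical L\'evy process with triplet $(b_{\theta}, Q_{\theta}, \nu_{\theta})$ on some probability space $(\Omega, \mc{F}, \mbb{P}_{\theta})$; the uniform bound on the triplets guarantees the associated symbols $\psi_{\theta}(\xi) = -ib_{\theta}\cdot\xi + \frac{1}{2}\xi\cdot Q_{\theta}\xi + \int_{y\neq 0}(1-e^{iy\cdot\xi}+iy\cdot\xi\I_{(0,1)}(|y|))\,\nu_{\theta}(dy)$ form a uniformly bounded family of continuous negative definite functions. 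The natural candidate semigroup is $T_t f(x) := \sup_{\theta \in I} \mbb{E}_{\mbb{P}_{\theta}} f(x + X_t^{\theta})$, and I would take the canonical space $\Omega = D([0,\infty), \mbb{R}^d)$, coordinate process $X_t$, set $\mc{H}$ to be the functions generated by test functions composed with finitely many coordinates, and define $\mc{E}$ as the supremum over the laws $\mbb{P}_{\theta}$. The work is to verify the five axioms of Definition~\ref{levy-3}: adaptedness and $X_0 \stackrel{d}{=} 0$ are immediate, stationary and independent increments follow because each $\mbb{P}_{\theta}$ makes $X$ a L\'evy process and the supremum is taken coordinatewise over the same family, and $X_t \stackrel{d}{\to} 0$ follows from $\psi_{\theta}(\xi) \to 0$ as $t \to 0$ uniformly in $\theta$ thanks to the uniform bound.

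The more delicate point in (\ref{levy-9-i}) is matching the characteristics: I must check that the pointwise generator of the constructed semigroup is $A^{(p)} f(x) = \sup_{\theta\in I}(-q_{\theta}(x,D)f)(x) = \sup_{\theta\in I}(-\psi_{\theta}(D)f)(x)$ and that Corollary~\ref{lk-9} recovers exactly the given family $(b_{\theta}, Q_{\theta}, \nu_{\theta})_{\theta\in I}$. For $f \in C_c^{\infty}(\mbb{R}^d)$ one computes $t^{-1}(T_t f(x) - f(x)) = t^{-1}\sup_{\theta}(\mbb{E}_{\mbb{P}_{\theta}}f(x+X_t^{\theta}) - f(x))$ and shows, using the standard linear L\'evy generator identity $\lim_{t\to 0} t^{-1}(\mbb{E}_{\mbb{P}_{\theta}}f(x+X_t^{\theta})-f(x)) = -\psi_{\theta}(D)f(x)$ together with the uniform bound \eqref{def-eq11} to interchange the limit with the supremum, that $A^{(p)} f(x) = \sup_{\theta}(-\psi_{\theta}(D)f)(x)$; this gives $C_c^{\infty}(\mbb{R}^d) \subseteq \mc{D}(A^{(p)})$, i.e.\ the process has a rich domain in the sense of Definition~\ref{levy-4}. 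I expect the interchange of $\lim_{t\to 0}$ and $\sup_{\theta}$ to be the main obstacle: it requires a convergence estimate for the difference quotient that is uniform in $\theta \in I$, which is precisely where the uniform boundedness hypothesis is used, via a Taylor expansion controlled by $\|f\|_{C_b^2}$ and $\sup_{\theta}\int\min\{1,|y|^2\}\,\nu_{\theta}(dy)$.

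For the uniqueness part (\ref{levy-9-ii}), the strategy is first to show that under tightness the characteristics determine the semigroup $(T_t)_{t\geq 0}$, and then that the semigroup determines all finite-dimensional distributions. The tightness hypothesis is what makes Corollary~\ref{lk-8}\eqref{lk-8-i} applicable, so that $C_b^{\infty}(\mbb{R}^d)\subseteq\mc{D}(A^{(p)})$ and the representation extends to $C_b^{\infty}$. Given two sublinear L\'evy processes with the same characteristics, their generators agree on $C_b^{\infty}(\mbb{R}^d)$ by Corollary~\ref{lk-9}, and I would invoke a uniqueness-from-the-generator result for sublinear semigroups --- the kind established by Denk et al.\ \cite{denk19} or via the Dynkin-type comparison of Theorem~\ref{dynkin-9} combined with strong continuity --- to conclude $T_t = \tilde{T}_t$ on $C_b^{\uc}(\mbb{R}^d)$ for all $t$. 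The remaining step passes from equality of the one-dimensional marginals $\mc{E}f(X_t) = T_t f(0)$ to equality of all finite-dimensional distributions: this is a clean induction using the independence and stationarity axioms of Definition~\ref{levy-3}, since for $0 = t_0 < \cdots < t_n$ one writes $\mc{E}\varphi(X_{t_1}, \ldots, X_{t_n})$ as an iterated application of the increment semigroups and the independence identity $\mc{E}(\varphi(X,Y)) = \mc{E}(\mc{E}(\varphi(x,Y))|_{x=X})$, reducing everything to the already-matched one-step kernels $T_{t_i - t_{i-1}}$.
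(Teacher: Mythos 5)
Your existence argument has a genuine gap at its very first step. The candidate $T_t f(x) := \sup_{\theta \in I} \mbb{E}_{\mbb{P}_{\theta}} f(x+X_t^{\theta})$, built from a \emph{fixed} family of classical L\'evy processes indexed by $\theta$, does not define a sublinear L\'evy process: the increments fail to be independent in the sense of Definition~\ref{levy-3}. With $\mc{E}=\sup_{\theta}\mbb{E}_{\mbb{P}_{\theta}}$ one has $\mc{E}\bigl(\mc{E}(\varphi(x,Y))\big|_{x=X}\bigr)=\sup_{\theta}\mbb{E}_{\mbb{P}_{\theta}}\bigl[\sup_{\theta'}\mbb{E}_{\mbb{P}_{\theta'}}\varphi(x,Y)\big|_{x=X}\bigr]$, and the inner supremum re-optimizes the parameter after the first increment, so this is in general strictly larger than $\sup_{\theta}\mbb{E}_{\mbb{P}_{\theta}}\varphi(X,Y)$; equivalently $T_tT_s\geq T_{t+s}$ with strict inequality possible, so the semigroup property also fails. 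A concrete counterexample is $I=\{-1,1\}$ with pure drifts $X_t^{\theta}=\theta t$: the static supremum gives $\max\{f(x+t),f(x-t)\}$, whereas the actual convolution semigroup with these characteristics is $\sup_{b\in[-1,1]}f(x+bt)$ of Example~\ref{dynkin-2}. The correct object is the ``process under uncertainty'': the supremum must run over all laws of semimartingales whose \emph{time-varying} differential characteristics stay in the set generated by $\{(b_{\theta},Q_{\theta},\nu_{\theta})\}$, and the nontrivial facts --- that this is a sublinear Markov convolution semigroup and that its pointwise generator on $C_b^{2}(\mbb{R}^d)$ is still $\sup_{\theta}(-\psi_{\theta}(D)f)$, so the characteristics read off via Corollary~\ref{lk-9} are the prescribed ones --- are exactly what the paper imports from Hollender and from \cite[Proposition 4.1, Corollary 4.2, Theorem 5.3]{hjb}. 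Your limit-supremum interchange is the right computation for the static $T_t$, but that is not the semigroup one needs.

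The uniqueness part is structurally close to the paper, but the one ingredient you leave open is the one that requires care. The Dynkin-type comparison of Theorem~\ref{dynkin-9} cannot close the argument: it yields only the two-sided bound $-\int_0^t T_s(-Af)\,ds\leq T_tf-f\leq\int_0^t T_s(Af)\,ds$, and Remark~\ref{dynkin-11} stresses these inequalities are in general strict, so equality of generators does not propagate to equality of semigroups along that route. The paper instead proves (Proposition~\ref{levy-5}) that $(t,x)\mapsto T_tf(x)$ and $(t,x)\mapsto P_tf(x)$ are jointly continuous (using translation invariance and strong continuity at $t=0$), applies Corollary~\ref{lk-8}\eqref{lk-8-i} and Corollary~\ref{lk-11} to see that both are viscosity solutions of $\partial_t u=\sup_{\theta}(-\psi_{\theta}(D)u)$, and then invokes uniqueness of viscosity solutions under the tightness hypothesis \cite[Corollary 2.34]{julian}. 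Your final step --- reducing equality of finite-dimensional distributions to the one-step kernels via the independence and stationarity axioms --- matches the paper.
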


There are several possibilities to construct a sublinear L\'evy process with a given characteristics, e.g.\ via the dynamic programming principle \cite{denk,nendel19} or as process under uncertainty \cite{julian,hjb}. Theorem~\ref{levy-9} tells us, in particular, that for nice triplets (i.e.\ if tightness holds) these constructions yield the same process, i.e.\ the constructed processes have the same finite-dimensional distributions and the same semigroup. \par \medskip

For the proof of Theorem~\ref{levy-9} the following result plays a crucial role. It gives a sufficient condition ensuring that a Markov convolution semigroup is uniquely determined by its pointwise generator restricted to $C_c^{\infty}(\mbb{R}^d)$.  The recent paper \cite{denk19} studies in a more general framework under which conditions a sublinear semigroup is uniquely determined by its generator.

\begin{prop} \label{levy-5}
	Let $(P_t)_{t \geq 0}$ and $(T_t)_{t \geq 0}$ be sublinear Markov convolution semigroups on $\mc{H} \subseteq C_b(\mbb{R}^d)$. Assume that the domains of the pointwise generators $(A^{(p)},\mc{D}(A^{(p)}))$ and $(L^{(p)},\mc{D}(L^{(p)}))$ contain $C_c^{\infty}(\mbb{R}^d)$ and \begin{equation*}
		\forall f \in C_c^{\infty}(\mbb{R}^d) \::\: A^{(p)} f = L^{(p)} f.
	\end{equation*}
	Denote by $(b_{\theta},Q_{\theta},\nu_{\theta})$, $\theta \in I$, the associated family of L\'evy triplets, cf.\ Corollary~\ref{lk-9}. If the family of measures $\nu_{\theta}$, $\theta \in I$, is tight, i.e.\ \begin{equation}
		\lim_{R \to \infty} \sup_{\theta \in I} \int_{|y|>R} \nu_{\theta}(dy)=0, \label{levy-11}
	\end{equation}
	then $P_t f = T_t f$ for all $t \geq 0$ and $f \in \mc{H}$.
\end{prop}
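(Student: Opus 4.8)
The plan is to prove this in three stages: reduce to smooth initial data and upgrade the hypothesis $A^{(p)}=L^{(p)}$ from $C_c^\infty(\mbb{R}^d)$ to $C_b^\infty(\mbb{R}^d)$; realize both semigroups as viscosity solutions of one and the same Cauchy problem; and finally close the argument with a comparison principle. First I would record that $P_t$ and $T_t$ are $\|\cdot\|_\infty$-contractions: since the semigroups are conservative, monotone and subadditive, the bound $f\le g+\|f-g\|_\infty$ gives $T_t f\le T_t g+\|f-g\|_\infty$ and symmetrically, so $\|T_t f-T_t g\|_\infty\le\|f-g\|_\infty$, and likewise for $P_t$. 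Because $C_b^\infty(\mbb{R}^d)$ lies densely in $\mc{H}$ for the uniform norm (for instance $\mc{H}=C_b^{\uc}(\mbb{R}^d)$, where mollification provides the approximation), it therefore suffices to prove $P_t f=T_t f$ for $f\in C_b^\infty(\mbb{R}^d)$.

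To pass from $C_c^\infty(\mbb{R}^d)$ to $C_b^\infty(\mbb{R}^d)$, I would use that tightness of $\nu_\theta$ together with Corollary~\ref{lk-8}\eqref{lk-8-i} yields $C_b^\infty(\mbb{R}^d)\subseteq\mc{D}(A^{(p)})\cap\mc{D}(L^{(p)})$. For $f\in C_b^\infty(\mbb{R}^d)$ set $f_n:=f\,\chi_n$ with $\chi_n$ the cut-offs from the proof of Corollary~\ref{lk-8}; then $f_n\in C_c^\infty(\mbb{R}^d)$, so $A^{(p)}f_n=L^{(p)}f_n$ by hypothesis, while the estimates in that proof give $A^{(p)}f_n\to A^{(p)}f$ and $L^{(p)}f_n\to L^{(p)}f$ pointwise as $n\to\infty$. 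Hence $A^{(p)}$ and $L^{(p)}$ coincide as sublinear operators on all of $C_b^\infty(\mbb{R}^d)$; by Corollary~\ref{lk-9} this common operator has the form $f\mapsto\sup_{\theta\in I}(-\psi_\theta(D)f)$ with a single ($x$-independent) family of continuous negative definite functions.

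Next I would fix $f\in C_b^\infty(\mbb{R}^d)$ and set $u(t,x):=T_t f(x)$, $v(t,x):=P_t f(x)$. Strong continuity and the semigroup property give $\|T_{t+s}f-T_t f\|_\infty\le\|T_s f-f\|_\infty\to0$, so $t\mapsto T_t f$ is uniformly continuous in $t$, and translation invariance together with $f\in C_b^{\uc}(\mbb{R}^d)$ shows $T_t f\in C_b^{\uc}(\mbb{R}^d)$ with a modulus of continuity independent of $t$; thus $u$, and by the same token $v$, is jointly continuous. By Proposition~\ref{visc-3} and Corollary~\ref{lk-11}, $u$ and $v$ are then both bounded uniformly continuous viscosity solutions of the \emph{same} spatially homogeneous Cauchy problem
\begin{equation*}
	\partial_t w(t,x)=\sup_{\theta\in I}(-\psi_\theta(D)w(t,\cdot))(x),\qquad w(0,\cdot)=f,
\end{equation*}
the identity of the two right-hand sides being exactly what the previous stage secured.

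Finally I would conclude by a comparison principle for this nonlocal Hamilton--Jacobi--Bellman equation: for bounded uniformly continuous viscosity sub- and supersolutions coinciding at $t=0$ one obtains $u\le v$, and exchanging the roles of $u$ and $v$ gives $u=v$, i.e.\ $T_t f=P_t f$; the contraction/density reduction then finishes the proof for general $f\in\mc{H}$. I expect this comparison principle to be the genuine obstacle: unlike the purely algebraic first two stages it is not a formal consequence of anything proved earlier and requires the standard doubling-of-variables machinery for integro-differential equations. The hypotheses are, however, tailored to make it go through — the operator is translation invariant with $\theta$-uniformly bounded characteristics, so the local part is a degenerate-elliptic second order operator with bounded coefficients while the nonlocal part is monotone with uniformly tight Lévy measures, and $\sup_{\theta\in I}$ preserves all of these structural properties (splitting the nonlocal term at a small radius and invoking uniform tightness controls the outer contribution). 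It is worth noting that the lack of invariance of $C_b^\infty(\mbb{R}^d)$ under $T_t$, illustrated by Example~\ref{dynkin-2}, is precisely why a direct Dynkin--Gronwall telescoping along $s\mapsto P_{t-s}T_s f$ breaks down, since $T_s f$ need not be smooth, and why the viscosity formulation, which only ever tests against smooth functions, is the natural device here.
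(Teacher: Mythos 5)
Your proposal follows essentially the same route as the paper: establish joint continuity of $(t,x)\mapsto T_tf(x)$ and $(t,x)\mapsto P_tf(x)$ via the contraction property and strong continuity, identify both as viscosity solutions of the same translation-invariant HJB equation (using tightness to extend the coincidence of the generators from $C_c^{\infty}(\mbb{R}^d)$ to $C_b^{\infty}(\mbb{R}^d)$, i.e.\ Corollary~\ref{lk-8}, Corollary~\ref{lk-11} and Remark~\ref{lk-12}\eqref{lk-12-i}), and conclude by uniqueness of viscosity solutions. The comparison principle you rightly flag as the genuine obstacle is not reproved in the paper either --- it is quoted from Hollender \cite[Corollary 2.34]{julian} --- and your preliminary density reduction to $f\in C_b^{\infty}(\mbb{R}^d)$ is harmless but unnecessary, since the viscosity argument applies directly to every $f\in\mc{H}$ for which $(t,x)\mapsto T_tf(x)$ is continuous.
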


\begin{proof}
	Let $f \in \mc{H}$. First we show that the mappings $(t,x) \mapsto T_t f(x)$ and $(t,x) \mapsto P_t f(x)$ are continuous; clearly, it suffices to consider one of the semigroups. For fixed $s \leq t$ the subadditivity and monotonicity of the operators yield \begin{equation*}
		T_t - T_s f = T_s T_{t-s} f - T_s f \leq T_s(T_{t-s}f-f) \leq \|T_{t-s} f-f\|_{\infty}.
	\end{equation*}
	Interchanging the roles of $s$ and $t$ we get \begin{equation*}
		\|T_tf-T_s f\|_{\infty} \leq \|T_{|t-s|} f-f\|_{\infty}, \qquad s,t \geq 0.
	\end{equation*}
	Hence, \begin{align*}
		|T_{t} f(x)-T_s f(y)|
		\leq |T_t f(x)-T_t f(y)| + |T_t f(y)-T_s f(y)| 
		\leq |T_t f(x)-T_t f(y)| + \|T_{|t-s|} f-f\|_{\infty}.
	\end{align*}
	Since $T_t f \in \mc{H} \subseteq C_b(\mbb{R}^d)$ is continuous and the semigroup is strongly continuous at $r=0$, we find that the right-hand side converges to $0$ if we let $s \to t$ and $y \to x$. Hence, $(t,x) \mapsto T_t f(x)$ is continuous.  
	By Corollary~\ref{lk-11} and Remark~\ref{lk-12}\eqref{lk-12-i}, $(t,x) \mapsto T_t f(x)$ and $(t,x) \mapsto P_t f(x)$ are viscosity solutions to the evolution equation \begin{equation*}
		\frac{\partial}{\partial t} u(t,x) = \sup_{\theta \in I} (-\psi_{\theta}(D) u(t,\cdot))(x), \qquad u(0,x)=f(x).
	\end{equation*}
	Because of the tightness condition \eqref{levy-11}, the Cauchy problem has a unique viscosity solution, cf.\ \cite[Corollary 2.34]{julian}, and so $P_t f = T_t f$ for all $t \geq 0$. 
\end{proof}

After these preparations, we are ready to prove Theorem~\ref{levy-9}.

\begin{proof}[Proof of Theorem~\ref{levy-9}]
	It follows from \cite[Remark 4.38]{julian} and \cite[Proposition 4.1, Corollary 4.2]{hjb} that there exist a measurable space $(\Omega,\mc{A})$, a family of probability measures $(\mbb{P})_{\mbb{P} \in \mathfrak{P}}$ and a stochastic process $(X_t)_{t \geq 0}$ with the following properties: \begin{itemize}
		\item $X_0 =0$ and $(X_t)_{t \geq 0}$ has stationary and independent increments,
		\item $T_t f(x) := \sup_{\mbb{P} \in \mathfrak{P}} \mbb{E}_{\mbb{P}} f(x+X_t)$ defines a sublinear Markov semigroup on $\mc{H} := C_b^{\uc}(\mbb{R}^d)$; here $\mbb{E}_{\mbb{P}}$ denote the expectation with respect to $\mbb{P}$.
		\item $C_b^2(\mbb{R}^d)$ is contained in the domain of the pointwise generator $A^{(p)}$,
		\item $A^{(p)} f(x) = \sup_{\theta \in I} (-\psi_{\theta}(D) f)(x)$ for all $f \in C_b^2(\mbb{R}^d)$.
	\end{itemize}
	Moreover, it follows from \cite[Theorem 5.3]{hjb} that $(T_t)_{t \geq 0}$ is strongly continuous on $C_b^{\uc}(\mbb{R}^d)$ and that $T_t f(0) \to f(0)$ for all $f \in C_b(\mbb{R}^d)$.  This proves \eqref{levy-9-i}.  \par
	Now assume that the family $\nu_{\theta}$, $\theta \in I$, is tight, and let $(X_t)_{t \geq 0}$ and $(Y_t)_{t \geq 0}$ be two sublinear L\'evy processes with rich domains and characteristics $(b_{\theta},Q_{\theta},\nu_{\theta})_{\theta \in I}$. The associated semigroups $(P_t)_{t \geq 0}$ and $(T_t)_{t \geq 0}$ are sublinear Markov convolution semigroups on $\mc{C}_b^{\uc}(\mbb{R}^d)$, cf.\ \cite[(Proof of) Theorem 2.3]{denk}. By assumption, their pointwise generators coincide on $C_c^{\infty}(\mbb{R}^d)$. Applying Corollary~\ref{lk-8} and Proposition~\ref{levy-5}, we find that $P_t f = T_t f$ for any $f \in \mc{C}_b^{\uc}(\mbb{R}^d)$ and $t \geq 0$. It is easily seen from the independence of the increments that the finite dimensional distributions of a sublinear L\'evy process are uniquely determined by its semigroup, and so the assertion follows.
\end{proof}


\begin{thebibliography}{99}\frenchspacing	
\bibitem{foellmer74} Airault, H., F\"ollmer, H.: Relative Densities of Semimartingales. \emph{Invent. Math.} \textbf{27} (1974), 299--327.


\bibitem{apple19} 
	Applebaum, D., Le Ngan, T.: The positive maximum principle on Lie groups. To appear: \emph{J.\ London Math.\ Soc.}. DOI: jlms.12262.

	\bibitem{barles07}
		Barles, G., Imbert, C.: Second-order elliptic integro-differential equations: Viscosity solutions' theory revisited. \emph{Ann.\ Inst.\ H.\ Poincar\'e (C) Non Linear Analysis} \textbf{25} (2008), 567--585.

\bibitem{ltp}
    B\"{o}ttcher, B., Schilling, R.\,L., Wang, J.: \emph{L\'evy-Type Processes: Construction, Approximation and Sample Path Properties}. Springer Lecture Notes in Mathematics vol.\ \textbf{2099}, (vol.~III of the ``L\'evy Matters'' subseries). Springer, 2014.
  
 \bibitem{courrege}
	 Courr\`ege, P.: Sur la forme int\'egro diff\'erentielle des op\'erateurs de $C_K^{\infty}$ dans $C$ satisfaisant au principe du maximum. In: \emph{Séminaire Brelot-Choquet-Deny. Théorie du potentiel} \textbf{10} (1965/66), expos\'e 2, pp.~1--38.

\bibitem{lions}
	   Crandall, M., Lions, P.: Viscosity solutions of Hamilton--Jacobi equations. \emph{Trans.\ Amer.\ Math.\ Soc.} \textbf{277} (1983), 1--42.
	   	    
\bibitem{denk}
 	  	Denk, R., Kupper, M., Nendel, M.: A semigroup approach to nonlinear L\'evy processes. Preprint, arXiv 1710.08130. To appear: \emph{Stoch.\ Proc.\ Appl.}

\bibitem{denk19} 
	Denk, R., Kupper, M., Nendel, M.: Convex semigroups on Banach lattices.  Preprint arXiv, 1909.02281.


\bibitem{dynkin}
	Dynkin, E.B.: \emph{Markov Processes I}. Springer, 1965.

\bibitem{evans}
		Evans, L.: On solving certain nonlinear partial differential equations by accretive operator methods. \emph{Israel J.\ Math.} \textbf{36} (1980), 225--247.
 
 \bibitem{hoh}
 	Hoh, W.: \emph{Pseudo-Differential Operators Generating Markov Processes}. Habilitationsschrift. Universit\"{a}t Bielefeld, Bielefeld 1998.

\bibitem{julian}
  		Hollender, J.: \emph{L\'evy-Type Processes under Uncertainty and Related Nonlocal Equations}. CreateSpace Independent Publishing Platform, 2016. 

 
 \bibitem{jacob123}
 	Jacob, N.: \emph{Pseudo Differential Operators and Markov Processes I, II, III}. Imperial College Press/World Scientific, London 2001--2005.
 
\bibitem{barcelona}
	Khoshnevisan, D., Schilling, R.\,L.: \emph{From L\'evy-Type Processes to Parabolic SPDEs}. Birkh\"{a}user, 2017.
 
 	\bibitem{matters}
 		K{\"u}hn, F.: \emph{L\'evy-Type Processes: Moments, Construction and Heat Kernel Estimates}. Springer Lecture Notes in Mathematics vol.\ \textbf{2187} (vol.~\textbf{6} of the \emph{L\'evy Matters} subseries). Springer, 2017.
 
 \bibitem{hjb} 
	 K\"uhn, F.: Viscosity solutions to Hamilton-Jacobi-Bellman equations associated with sublinear L\'evy(-type) processes. \emph{ALEA Lat.\ Am.\ J.\ Probab.\ Math.\ Stat.} \textbf{16} (2019), 531--559.
 
 \bibitem{miyadera}
	 Miyadera, I.: \emph{Nonlinear semigroups}. American Mathematical Society, Providence (RI), 1992.
 
 \bibitem{nendel19} 
	 Nendel, M., R\"ockner, M.: Upper envelopes of families of Feller semigroups and viscosity solutions to a class of nonlinear Cauchy problems. Preprint, arXiv 1906.04430.
 
 
 	\bibitem{peng}
 		Peng, S.: G-expectation, G-Brownian motion and related stochastic calculus of It\^o type. In \emph{Stochastic Analysis and Applications -- The Abel Symposium 2005}, vol.~2, Springer, 2007, 541--567. 
 
\bibitem{rs-cons} 
	Schilling, R. L.: Conservativeness of Semigroups Generated by Pseudo Differential Operators. \emph{Potential Anal.} \textbf{9} (1998), 91--104.

\bibitem{schnurr12} 
	Schnurr, A.: On the semimartingale nature of Feller processes with killing. \emph{Stoch.\ Proc.\ Appl.}  \textbf{122} (2012), 2758--2780.

\bibitem{wald1}
	von Waldenfels, W.: \emph{Eine Klasse station\"arer Markovprozesse}. Kernforschungsanlage J\"ulich, Institut f\"ur Plasmaphysik, J\"ulich 1961.

\bibitem{wald2}
	von Waldenfels, W.: Fast positive Operatoren. \emph{Z.\ Wahrscheinlichkeitstheorie verw.\ Geb.} \textbf{4} (1965), 159--174.

\end{thebibliography}
\end{document}